\g@addto@macro \normalsize {%
 \setlength\abovedisplayskip{5pt plus 2pt minus 2pt}%
 \setlength\belowdisplayskip{5pt plus 2pt minus 2pt}%
}
	\numberwithin{equation}{section}
	\theoremstyle{plain}
		\newtheorem{thm}{Theorem}[section]
	\theoremstyle{plain}
		\newtheorem{cor}[thm]{Corollary}
		\newtheorem{lem}[thm]{Lemma}
		\newtheorem{prop}[thm]{Proposition}
	\theoremstyle{definition}
		\newtheorem{df}[thm]{Definition}
	\theoremstyle{remark}
		\newtheorem{rem}[thm]{Remark}
\newcommand{\define}[3]{\expandafter#1\csname#3\endcsname{#2{#3}}}
\newcommand{\T}{\mathbb{T}}
\newcommand{\Z}{\mathbb{Z}}
\newcommand{\R}{\mathbb{R}}
\newcommand{\C}{\mathbb{C}}
\newcommand{\CC}
{\mathcal{C}}
\newcommand{\E}{\mathbb{E}}
\newcommand{\N}{\mathbb{N}}
\newcommand{\F}{\mathcal{F}}
\newcommand{\Q}{\mathcal{Q}}
\newcommand{\dd}{\,\text{d}}
\newcommand{\FF}{\mathcal{F}}
\title{Recent results On The modulated cubic Nonlinear Schr\"{o}dinger equation on $\T^2$}
\author{Josh Messing}
\address{Department of Mathematics, Massachusetts Institute of Technology}
\email{jemess@mit.edu}
\begin{document}
\begin{abstract}
New Strichartz estimates for the modulated cubic nonlinear Schr\"{o}dinger equation are proved. These Strichartz estimates allow us to show that this equation is pathwise locally well-posed. We also show that improved Strichartz estimates are available in the case where the modulation is white noise. Additionally, we comment on a few basic properties of the modulated cubic nonlinear Schr\"{o}dinger equation such as conservation of mass and convergence of its linear flow as time tends to zero.
\end{abstract}
\thanks{This work was funded in part by the NSF grant DMS-2306378 and the Simons Foundation through the Simons Collaboration on Wave Turbulence.}
\maketitle

\section{Introduction}\label{Sec:Introduction}
In this paper we study the modulated cubic nonlinear Schr\"{o}dinger equation
\begin{equation}
\begin{cases}
\frac{du(t,x)}{dt} + \frac{dW_t}{dt}\Delta u(t,x) = |u(t,x)|^2u(t,x)\\
u(0,x) = u_0 \in H^s(\T^2)
\end{cases}
\end{equation}
where $\T^2 $ is the standard square torus. The function $W_t$, which we refer to as the modulation, is continuous but no additional regularity is assumed so we interpret this equation in its Duhamel form
\begin{equation}\label{Eq:ModulatedNLSDuhamel}
u(t,x) = e^{iW_t\Delta}u_0(x) - i\int_0^t e^{i(W_t-W_{t'})\Delta}|u(t',x)|^2u(t',x) \dd t'. \tag{MNLS}
\end{equation}
When $W_t = t$ this is simply the cubic nonlinear Schr\"{o}dinger equation (NLS). Bourgain pioneered the study of well-posedness, namely existence, uniqueness, and stability with respect to initial data, for this type of equation in his seminal paper \cite{bourgain93}. Later Bourgain and Demeter's proof of the $\ell^2$ decoupling conjecture expanded well-posedness results to irrational tori \cite{bourgain2015proofl2decouplingconjecture} where the ratio between the two periods of the torus is irrational. NLS equations have been studied extensively but MNLS equations are not as well understood. The case where $W_t$ is a path of a standard Brownian motion is of particular interest since it has applications to settings with random dispersion and optical fibers \cite{DEBOUARD20101300, debussche2010quinticNLS}. In this case this equation is referred to as the cubic NLS with white noise dispersion and there are already a number of results on its well-posedness theory. In the one dimensional nonperiodic case Debushe et al. use stochastic Strichartz estimates to perform contraction arguments in stochastic Strichartz spaces \cite{DEBOUARD20101300, debussche2010quinticNLS}. Stewart uses a similar approach to study the one dimensional periodic NLS with white noise dispersion \cite{stew}. While these methods have been fruitful for obtaining probabilistic\footnote{A probabilistic result typically holds for data that is measured in some $L^p(\Omega)$ based space where $\Omega$ is the probability space on which $W_t$ is defined. In this paper we contrast this with pathwise results that hold on paths $t\mapsto W_t$.} local and global well-posedness results on $\R$ and $\T$ the author does not know of any well-posedness results for a MNLS, white noise or otherwise, in the higher dimensional periodic setting. 

An alternative approach has been developed by Chouk and Gubinelli in \cite{chouk2015nonlinearpdesmodulateddispersion}. Unlike the aformentioned results their work does not rely on $W_t$ being a standard Brownian motion. Instead they use ideas from the theory of rough paths to construct a solution to MNLS as long as the modulation satisfies an irregularity condition which we discuss in Section \ref{Sec:StrichartzEstimates} and define in Definition \ref{Defn:RhoGammaIrregular} for convenience. They use the fact that a standard Brownian motion almost surely satisfies this irregularity condition to construct solutions to MNLS on almost every path $t \mapsto W_t(\omega)$ where $\omega$ is sampled from the probability space $(\Omega, \FF, P)$ on which the standard Brownian motion is defined. We refer to this type of result as pathwise well-posedness. It is weaker in spirit than the well-posedness results described above. The estimates that are obtained hold almost surely in $\Omega$. On the other hand the aformentioned stochastic Strichartz estimates that are proven in \cite{DEBOUARD20101300, debussche2010quinticNLS, stew} all hold in an $L^p(\Omega)$ based spaces. 

Much of this paper is devoted to the study of pathwise local well-posedness for MNLS on $\T^2$. We combine Chouk and Gubinelli's approach with ideas from a recent paper of Herr and Kwak \cite{IncidenceGeometryStrichartz}. Herr and Kwak prove global well-posedness of the cubic NLS on $\T^2$ for small initial data. They later expanded on this work in \cite{herr2025globalwellposedness} to remove the assumption of small initial data. One of the key contributions of their work is an application of incidence geometry to obtain a sharp Strichartz estimate for the Schr\"{o}dinger propagator $\{e^{it\Delta}\}_{t\geq0}$. They show that there is a constant $M$ such that for all $f \in L^2(\T^2)$, $N \in \N$, $C \subset \Z^2$ such that $|C|=N$, and intervals $I \subset [0, \infty)$ with length $|I| \leq \frac{1}{\log N}$, 
\begin{equation}
\|e^{it\Delta}P_Cf(x)\|_{L^4(I \times \T^2)} \leq M\|f(x)\|_{L^2(\T^2)},   
\end{equation}
where $P_C$ is the Fourier projection onto the set $C$. 

In Theorem \ref{Thm:PathwiseStrichartz} we combining the approaches of Herr and Kwak as well as Chouk and Gubinelli to prove that a similar Strichartz type estimate holds for the modulated Schr\"{o}dinger propagator $\{e^{i W_t \Delta}\}_{t\geq 0}$ as long as $W_t$ is sufficiently irregular. Here we give an informal statement of this result which will be stated more precisely in Section \ref{Sec:StrichartzEstimates}.

\begin{thm}[Informal Pathwsie Strichartz Estimates]\label{T11}
Let $\epsilon > 0$ and $W_t:[0,\infty) \rightarrow \R$ be a function with sufficient "irregularity" as discussed in Section \ref{Sec:StrichartzEstimates}. For all rectangles $S = \{(\xi_1, \xi_2): |\xi_1-a|,|\xi_2-b| \leq N\} \subset \Z^2$, intervals $I \subset [0,1]$ with $|I|=T_0$, and $f \in L^2(\T^2)$ we have \[
 \|e^{iW_t\Delta}P_Sf\|_{L^4(I \times \T^2)} \lesssim_{W_t, \epsilon} (N^{2\epsilon}\sqrt{T_0})^{1/4}\|f\|_{L^2(\T^2)}.
 \]  
 Moreover, in the case $W_t =B^H_t$ is a fractional Brownian motion with Hurst index $H \leq \frac{1}{2}$ then a stronger "Stochastic Strichartz" estimate holds.
\end{thm}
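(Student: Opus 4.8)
The plan is to expand the fourth power of the left-hand side on the Fourier side and to recognise that the sole effect of the modulation is to replace the usual oscillatory time integral $\int_I e^{-it\lambda}\dd t$ by the modulated integral $\Phi^W_I(\lambda):=\int_I e^{-iW_t\lambda}\dd t$, whose size is exactly what the irregularity hypothesis of Section~\ref{Sec:StrichartzEstimates} controls. Writing $e^{iW_t\Delta}P_Sf=\sum_{k\in S}\hat f(k)e^{-iW_t|k|^2}e^{ik\cdot x}$ and using $\|g\|_{L^4_x}^4=\|g^2\|_{L^2_x}^2$ together with Plancherel in $x$, I would reduce the quantity $\|e^{iW_t\Delta}P_Sf\|_{L^4(I\times\T^2)}^4$ to
\[
\sum_{m\in\Z^2}\sum_{k,k'}b^{(m)}_k\,\overline{b^{(m)}_{k'}}\,\Phi^W_I\!\big(E_k-E_{k'}\big),
\]
where, for fixed total frequency $m$, $b^{(m)}_k:=\hat f(k)\hat f(m-k)$ ranges over $k$ with $k,m-k\in S$ and $E_k:=|k|^2+|m-k|^2=2|k-\tfrac m2|^2+\tfrac{|m|^2}2$. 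Each summand in $m$ is thus a quadratic form $\langle b^{(m)},\mathcal M^{(m)}b^{(m)}\rangle$ in the Hermitian modulated Gram matrix $\mathcal M^{(m)}_{k,k'}=\Phi^W_I(E_k-E_{k'})$, and one has the bookkeeping identity $\sum_m\|b^{(m)}\|_{\ell^2}^2=\|f\|_{L^2}^4$.

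The core of the argument is a Schur test giving $\|\mathcal M^{(m)}\|_{\mathrm{op}}\le\sup_k\sum_{k'}|\Phi^W_I(E_k-E_{k'})|$, which I would split into a resonant (diagonal, $E_{k'}=E_k$) and a non-resonant part. On the resonant set $\Phi^W_I(0)=T_0$, while $E_{k'}=E_k$ forces $2k'-m$ onto a lattice circle of radius $\lesssim N$; the number of such $k'$ is $O_\epsilon(N^\epsilon)$ by the divisor/incidence count underlying the estimate of Herr and Kwak recalled in Section~\ref{Sec:Introduction}, so this contribution is $\lesssim N^\epsilon T_0$. On the non-resonant set I would invoke the $(\rho,\gamma)$-irregularity to obtain $|\Phi^W_I(\lambda)|\lesssim_W|\lambda|^{-\rho}T_0^\gamma$, and then sum over $k'$ by grouping according to the integer value $E_{k'}=e$ (again $O_\epsilon(N^\epsilon)$ lattice points per value) to reach
\[
\sum_{k':\,E_{k'}\ne E_k}|\Phi^W_I(E_k-E_{k'})|\lesssim_W T_0^\gamma\,N^\epsilon\!\!\sum_{0<|n|\lesssim N^2}|n|^{-\rho}.
\]
Taking $\gamma=\tfrac12$ and $\rho\ge1$ (the borderline parameters: the logarithm at $\rho=1$ and the small deficit at $H=\tfrac12$ are absorbed into $N^{2\epsilon}$) makes this $\lesssim_W N^{2\epsilon}\sqrt{T_0}$, and since $T_0\le1$ the resonant term $N^\epsilon T_0$ is dominated by $N^\epsilon\sqrt{T_0}$. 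Feeding $\|\mathcal M^{(m)}\|_{\mathrm{op}}\lesssim_W N^{2\epsilon}\sqrt{T_0}$ back through $\sum_m\langle b^{(m)},\mathcal M^{(m)}b^{(m)}\rangle\le(\sup_m\|\mathcal M^{(m)}\|_{\mathrm{op}})\|f\|_{L^2}^4$ and taking fourth roots yields the claimed estimate.

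I expect the main obstacle to be the non-resonant summation, that is, forcing the irregularity decay $|\lambda|^{-\rho}$ to defeat the number of frequency configurations producing each value of $\lambda$. This is precisely where the two inputs must be balanced: the incidence-geometric lattice bound controls the multiplicity of each resonance level, while the irregularity controls the near-resonant levels where $|\lambda|$ is small, and convergence of $\sum|n|^{-\rho}$ up to scale $N^2$ forces $\rho\ge1$, which is exactly the regime $H\le\tfrac12$. Carrying the rectangle geometry of $S$ through the circle count and keeping every loss at the level $N^{O(\epsilon)}$ is the delicate bookkeeping. Conceptually, the identity $\Phi^W_I(\lambda)=\overline{\widehat{L^W_I}(\lambda)}$, with $L^W_I$ the occupation density (local time) of $W$ on $I$, shows the irregularity to be Fourier decay of the local time and suggests an alternative route through transference of the Herr--Kwak estimate against the occupation measure.

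For the stochastic improvement, when $W_t=B^H_t$ with $H\le\tfrac12$ I would replace the deterministic pathwise bound on $\Phi^{W}_I$ by a direct Gaussian computation. Since $\E\big[e^{-i\lambda(B^H_r-B^H_{r'})}\big]=e^{-\frac12\lambda^2|r-r'|^{2H}}$, a scaling change of variables gives $\E|\Phi^{B^H}_I(\lambda)|^2=\int_I\int_I e^{-\frac12\lambda^2|r-r'|^{2H}}\dd r\,\dd r'\sim T_0\,|\lambda|^{-1/H}$, so in $L^2(\Omega)$ the effective frequency decay is $|\lambda|^{-1/(2H)}$ with exponent $\rho=\tfrac1{2H}\ge1$. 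Inserting this, together with its higher moments obtained from Gaussian hypercontractivity, into the Schur test in the $L^p(\Omega)$ norm produces a stochastic Strichartz estimate that dispenses with the path-dependent irregularity constant and, combined with the sharp incidence bound of Herr and Kwak, can remove the $N^\epsilon$ loss; this is the stronger estimate asserted.
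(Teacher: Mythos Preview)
Your pathwise argument is correct, but it follows a genuinely different route from the paper. The paper tracks Herr--Kwak closely: it reduces to nonnegative $f$, builds the level-set sequences $f_n,g_n,h_n$, invokes Szemer\'edi--Trotter (Proposition~\ref{prop:SzTr}) to prune the exceptional lines, and then applies Proposition~\ref{Prop:PropositionForStrichartz} to bound $\sum_{Q\in\Q^\tau}g_n(Q)$ level by level before summing against the irregularity decay of $\int_0^{T_0}e^{iW_t\tau}\dd t$. You instead recast the $L^4$ expansion as a quadratic form $\sum_m\langle b^{(m)},\mathcal M^{(m)}b^{(m)}\rangle$ and bound each $\|\mathcal M^{(m)}\|_{\mathrm{op}}$ by a Schur test, controlling the row sums with the classical divisor bound $r_2(n)=O_\epsilon(n^\epsilon)$ on lattice points on circles. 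This is essentially Bourgain's original \cite{bourgain93} counting argument, and it succeeds here precisely because the pathwise statement already carries an $N^{O(\epsilon)}$ loss from the irregularity exponent $\rho=1-\epsilon$; the heavier incidence-geometric machinery buys nothing extra in that regime. Your approach is shorter and avoids the level-set construction entirely.

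The two routes diverge in the stochastic case. The paper's bound $(\log N\cdot T_0+T_0^{1-H})^{1/4}$ has no power of $N$ on the dominant $T_0^{1-H}$ term, and that sharpness comes exactly from Proposition~\ref{Prop:PropositionForStrichartz}, whose $\frac1M\sum_{\tau\approx M}\sum_{Q\in\Q^\tau}f(Q)$ estimate carries no $\epsilon$-loss. Your Schur/divisor route would instead give $N^\epsilon T_0^{1-H}$, as you implicitly concede. Your stochastic sketch is also slightly off mechanically: since $f$ is assumed independent of $B^H$, the expectation passes directly through to the matrix entries and one uses the \emph{first} moment $\E[\Phi^{B^H}_I(\lambda)]=\int_I e^{-\lambda^2 t^{2H}/2}\dd t\lesssim\min(T_0,|\lambda|^{-1/H})$, with decay exponent $1/H\ge2$, rather than the second moment $\E|\Phi|^2\sim T_0|\lambda|^{-1/H}$ giving $L^2(\Omega)$-decay $|\lambda|^{-1/(2H)}$; no hypercontractivity is needed for the $L^4(\Omega)$ bound as stated. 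So your simpler route recovers the pathwise theorem cleanly but does not by itself reach the paper's stochastic estimate without importing the Herr--Kwak counting.
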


In Section \ref{Sec:StrichartzEstimates} we discuss how Theorem \ref{T11} can come in part from leveraging the decay in $\tau$ of oscillatory integral expressions of the form $|\Phi^W_t(\tau) - \Phi^W_s(\tau)|$ where $\Phi^W_T(\tau) := \int_0^T e^{iW_t \tau} \dd t$. The decay in $\tau$ of this expression turns out to be the exact type of irregularity condition that Chouk and Gubinelli study in \cite{chouk2015nonlinearpdesmodulateddispersion}. 

Going back to Theorem \ref{T11}, we note that the propagator $\{e^{iW_t\Delta}\}_{t\geq 0}$ solves the linear part of \eqref{Eq:ModulatedNLSDuhamel} so these Strichartz estimates play an important role in the proof of pathwise local well-posedness. In Theorem \ref{Thm:LocalWellPosedness} and Corollary \ref{Cor:PathWiseLocalWell-posednessBrownian} we prove this for initial data in $H^s$ for all $s>0$. Here we state the result informally.  

\begin{thm}[Informal Local Well-posedness]\label{T12}
If $W_t$ is sufficiently "irregular" then \eqref{Eq:ModulatedNLSDuhamel} is locally well-posed for initial data in $H^s(\T^2)$ for $s > 0$. Moreover, if $W_t = B^H_t$ is a fractional Brownian motion with Hurst index $H \leq \frac{1}{2}$ then the paths of $W_t$ are almost surely "irregular" enough so that \eqref{Eq:ModulatedNLSDuhamel} is pathwise locally well-posed. 
\end{thm}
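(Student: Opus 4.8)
The plan is to solve \eqref{Eq:ModulatedNLSDuhamel} by a Picard contraction built on the Strichartz estimate of Theorem~\ref{T11}. The starting observation is that, since $e^{iW_t\Delta}$ is the Fourier multiplier $\xi\mapsto e^{-iW_t|\xi|^2}$, it commutes with itself and with every Littlewood--Paley projection $P_N$; in particular $e^{i(W_t-W_{t'})\Delta}=e^{iW_t\Delta}e^{-iW_{t'}\Delta}$ and $P_N u(t)=e^{iW_t\Delta}P_N\bigl(e^{-iW_t\Delta}u(t)\bigr)$. Writing $I=[0,T_0]$, I would carry out the fixed point in the complete space
\[
Z^s_{T_0}=\Bigl\{u:\ \|u\|_{Z^s_{T_0}}:=\|u\|_{L^\infty(I;H^s)}+\Bigl(\textstyle\sum_{N}N^{2s}\|P_Nu\|_{L^4(I\times\T^2)}^2\Bigr)^{1/2}<\infty\Bigr\},
\]
the sum running over dyadic $N$. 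The $L^\infty H^s$ part forces a fixed point to be a genuine $C(I;H^s)$ solution, while the second part is precisely the quantity Theorem~\ref{T11} governs (taking $S$ to be a square of side $\sim N$ containing the shell $|\xi|\sim N$). Denoting by $\Gamma(u)$ the right-hand side of \eqref{Eq:ModulatedNLSDuhamel}, it suffices to prove a linear bound $\|e^{iW_t\Delta}u_0\|_{Z^s_{T_0}}\lesssim\|u_0\|_{H^s}$ and a trilinear bound carrying a positive power of $T_0$, and then to invoke the contraction mapping principle on a ball of $Z^s_{T_0}$.

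For the linear estimate the $L^\infty H^s$ piece is immediate because $e^{iW_t\Delta}$ is unitary on $H^s$. For the Strichartz piece, Theorem~\ref{T11} gives, for each dyadic $N$, $\|P_N e^{iW_t\Delta}u_0\|_{L^4(I\times\T^2)}\lesssim N^{\epsilon/2}T_0^{1/8}\|P_N u_0\|_{L^2}$, so that summing in $N$ costs only the factor $N^{\epsilon}$. Because the theorem is claimed only for $s>0$, this $\epsilon$-derivative loss is absorbed by choosing $\epsilon$ small relative to $s$ (equivalently, running the argument at Strichartz regularity $s-\epsilon/2$), and the factor $T_0^{1/8}$ will later help supply smallness.

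The crux is the trilinear estimate controlling the Duhamel term $\int_0^t e^{iW_t\Delta}e^{-iW_{t'}\Delta}(u_1\bar u_2 u_3)(t')\,dt'$ in $Z^s_{T_0}$. I would estimate the retarded time integral by first reducing to the non-retarded operator (via a Christ--Kiselev type argument, or by discarding the cutoff $\mathbf 1_{t'<t}$ under duality), then Littlewood--Paley decompose the three inputs and the dual function into frequencies $N_1,N_2,N_3,N_4$. Pairing the factors two at a time and using $\|fg\|_{L^2}\le\|f\|_{L^4}\|g\|_{L^4}$ reduces everything to four applications of Theorem~\ref{T11}, and the two largest frequencies are forced to be comparable by frequency conservation. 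Summing the resulting series over dyadic scales is where $s>0$ is essential: the weights $N^s$ together with the comparability of the top frequencies make the sum converge, and they leave exactly enough room to absorb the accumulated $N^{\epsilon}$-losses from Theorem~\ref{T11} (so one fixes $\epsilon\ll s$). Each Strichartz factor also contributes a power $T_0^{1/8}$, so the whole term is bounded by $T_0^{\theta}\prod_j\|u_j\|_{Z^s_{T_0}}$ for some $\theta>0$. I expect this step to be the main obstacle, precisely because the modulated propagator has none of the clean Galilean/dispersive structure of $e^{it\Delta}$: one cannot exploit the usual space--time Fourier support relations and must route the entire multilinear analysis through the black-box $L^4$ bound of Theorem~\ref{T11}, keeping careful track of its polynomial loss.

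With both estimates in hand, choosing $T_0$ small depending on $\|u_0\|_{H^s}$ makes $\Gamma$ a contraction on a ball of $Z^s_{T_0}$; the fixed point is the unique local solution, and the same difference estimates yield Lipschitz dependence on $u_0$, giving local well-posedness for every $s>0$ whenever $W_t$ is irregular enough for Theorem~\ref{T11}. Finally, to obtain the pathwise statement for a fractional Brownian motion $W_t=B^H_t$ with $H\le\tfrac12$, I would invoke the fact (established in the rough-path framework of Chouk and Gubinelli \cite{chouk2015nonlinearpdesmodulateddispersion}) that such a path almost surely satisfies the $(\rho,\gamma)$-irregularity condition of Definition~\ref{Defn:RhoGammaIrregular} with parameters strong enough to validate Theorem~\ref{T11}; rougher paths ($H\le\tfrac12$) are more irregular and hence admissible. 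Applying the deterministic result on the full-measure set of such paths then yields pathwise local well-posedness.
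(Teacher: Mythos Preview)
Your contraction space is not the one the paper uses, and the difference is where the argument breaks. The paper does \emph{not} iterate in an $L^\infty H^s$--plus--Strichartz norm; it works in the $V^2$-based two-scale norm
\[
\|u\|_{Z_N}=\|1_{I_N}u\|_{Y^0(V^2)}+N^{-s}\|1_{I_N}u\|_{Y^s(V^2)}
\]
of Definition~\ref{Defn:Zn}, on the frequency-adapted interval $I_N=[0,\beta/N^{4\epsilon}]$. That choice of interval is the whole point: it makes the Strichartz constant of Theorem~\ref{Thm:PathwiseStrichartz} equal to $(N^{2\epsilon}\sqrt{|I_N|})^{1/4}=\beta^{1/8}$, \emph{independent of $N$} at the pivot frequency; only a residual loss $(M/N)^{\epsilon}$ survives at higher frequencies $M>N$, and this is summable against the $N^{-s}Y^s$ component since $\epsilon<s$ (see \eqref{Intermediary6} and the dyadic sum following it in Lemma~\ref{Lem:TrilinearEstimate}). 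The $V^2$ structure further supplies the duality estimate \eqref{YsInhomogeneousEstimate}, which plays the role of your Christ--Kiselev step, and the exact orthogonality \eqref{YsSquaring Estimate} over disjoint Fourier boxes, which is what drives the bilinear cube-tiling in the proof of Lemma~\ref{Lem:TrilinearEstimate}.

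In your scheme the polynomial loss $N^{\epsilon}$ of Theorem~\ref{T11} sits at \emph{every} dyadic scale and never gets cancelled. Concretely, in the case where the output frequency matches the top input, $N\sim N_1$, your inhomogeneous bound reads
\[
N^{s'}\|P_N\mathcal I(F)\|_{L^4}\ \lesssim\ T_0^{1/4}\,N^{\epsilon}\cdot N_1^{s'}\|P_{N_1}u\|_{L^4}\,\|u\|_{L^4}^2,
\]
and after the $\ell^2_N$ sum the factor $N^{\epsilon}$ remains; the map is therefore not a contraction at any fixed Strichartz level $s'$. Running the Strichartz piece at $s-\epsilon/2$ repairs only the \emph{linear} estimate; each pass through the nonlinearity costs another $\epsilon$ derivatives, and the only way to recoup them from your $L^\infty H^s$ component is via the Sobolev embedding $H^s\hookrightarrow L^4_x$, which loses half a derivative and so works at best for $s>\tfrac12$, not the full range $s>0$. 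This is exactly the place where the modulated problem is harder than $W_t=t$: the deterministic Herr--Kwak Strichartz bound has only a $\log N$ loss, whereas here it is $N^{\epsilon}$, and a plain $L^4$-based iteration cannot swallow a polynomial loss. The fractional Brownian motion claim is fine in spirit; the precise input the paper invokes is Theorem~\ref{Thm:BrownianIsRhoIrregular} (from \cite{catellier2016averagingirregularcurves}), giving almost-sure $(\rho,1/2)$-irregularity for every $\rho<1$ when $H\le\tfrac12$.
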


One downfall of this pathwise local well-posedness approach is that if $W_t$ is a standard Brownian motion on $(\Omega, \FF, P)$ then the time of existence for \eqref{Eq:ModulatedNLSDuhamel} is not guaranteed to be uniform in $\Omega$ even if the initial data is uniformly small. As proved by Herr and Kwak in \cite{herr2025globalwellposedness} the deterministic cubic NLS with $W_t=t$ is globally well-posed for initial data in $H^s(\T^2)$ for all $s>0$. One may hope that a similar result could hold for \eqref{Eq:ModulatedNLSDuhamel} when $W_t$ is a standard Brownian motion. Stewart obtains such a result in the one dimensional case in \cite{stew}. However, their methods are not available to us because Strichartz estimates are much weaker in higher dimensions. 

In Theorem \ref{T11} or Theorem \ref{Thm:PathwiseStrichartz} we prove a stochastic Strichartz type result that holds in an $L^4(\Omega)$-based space, where $(\Omega, \FF, P)$ is a probability space. This estimate is as strong in the time and space variables as the sharp result obtained by Herr and Kwak in \cite[Theorem 1.2]{IncidenceGeometryStrichartz}. This suggest the possibility of obtaining some type of local or global well-posedness result that holds in some $L^4(\Omega)$ based space. We attempted to use this estimate to perform contraction arguments in stochastic versions of Bourgain's $X^{s,b}$ spaces, and in stochastic versions of $Y^s(V^2)$ type spaces (defined in Definition \ref{Defn:YSpace}), which have been used for the study of Schr\"{o}dinger equations at critical regularity \cite{IncidenceGeometryStrichartz, herr2025globalwellposedness, XsYsEstimates}. In both approaches we came upon a number of technical difficulties relating to controlling integration in $\Omega$, and we expect to continue our investigation in this direction in the future. 

Because \eqref{Eq:ModulatedNLSDuhamel} has not been studied on $\T^2$ we include a few other results about its general properties in Section \ref{Sec:AdditionalResults}. We show that the $L^2(\T^2)$ mass of solutions to \eqref{Eq:ModulatedNLSDuhamel} is conserved in time. This property is interesting in its own right and is often necessary for global well-posedness arguments that rely on iterating a local result with the time of existence depending on the $L^2(\T^2)$ mass of the initial data. We also study the problem of convergence of the linear modulated Schr\"odinger flow to initial data. This measure of continuity of the solution was first studied by Carlson in \cite{carleson1980} and has since been studied extensively in many different contexts. See \cite{MoyuaVega2008DEquals1SFlow,pointwiseconvergenceschrodingerflow,du2017sharpschrodingermaximalestimate, du2019sharpl2estimateschrodinger} and references therein.

Finally, we note that Theorem \ref{T11} and Theorem \ref{T12} could easily be extended to any "rational" torus $\R^2 / \alpha \Z \times \beta \Z$ where $\alpha $ and $\beta$ are a rational multiple of each other. At the moment it is not clear how to extend our results to an irrational torus $\R^2 / \alpha \Z \times \beta \Z$ where $\alpha$ and $\beta$ are not a rational multiple of each other. For the standard linear Schr\"odinger operator ($W_t=t$) many results related to the Strichartz estimates on irrational tori have been proved via Bourgain and Demeter's proof of the $\ell^2$ decoupling conjecture in \cite{bourgain2015proofl2decouplingconjecture}. It is not clear if these ideas have analogues in the setting where $W_t$ is a more exotic modulation.

\textbf{Outline of the paper:} In Section \ref{Sec:Preliminaries} we fix the notation that will be used in the rest of the paper. Section \ref{Sec:StrichartzEstimates} contains the proof of Strichartz estimates for the modulated Schr\"{o}dinger propagator $\{e^{iW_t\Delta}\}_{t\geq 0}$, as well as an extensive discussion of the methods that are used. In Section \ref{Sec:FunctionSpaces} we discuss the function spaces on which we will perform a Picard iteration scheme to derive solutions to \eqref{Eq:ModulatedNLSDuhamel}. In Section \ref{Sec:Trilinear} we prove a number of key estimates that are used for the proof of pathwise local well-posedness in Theorem \ref{Thm:LocalWellPosedness} in Section \ref{Sec:Local}. In Section \ref{Sec:AdditionalResults} we study conservation of mass for \eqref{Eq:ModulatedNLSDuhamel} and convergence of the propagator $\{e^{iW_t\Delta}\}_{t \geq 0}$ as $t$ tends to zero when $W_t$ is a stochastic process satisfying an integrability condition. 
\section{Preliminaries}\label{Sec:Preliminaries}
Throughout this paper we will use $|S|$ to denote the cardinality of a set $S$. If $I \subset \R$ is an interval we will use $|I|$ for the length of the interval. the notation $A \lesssim B $ or $A \gtrsim B$ will mean that there is some constant $C$ such that one always has $A \leq CB $ or $A \geq CB $ respectively. $A \sim B$ will mean that $A \lesssim B$ and $B \lesssim A$. The constant $C$ may depend on choices that are held constant throughout the construction in question. For example, it may depend on the choice of modulation $W_t$, the Sobolev space $H^s(\T^2)$ where the initial data lies, or geometric constants coming from the fact that we work exclusively on the square torus $\T^2 := \R^2 / 2\pi\Z^2$. If we want to be explicit about dependencies we may write $A \lesssim_{\alpha} B$ or $B \gtrsim_{\alpha}C$ to denote that $A \lesssim B$ or $A \gtrsim B$ with implicit constants depending on a parameter $\alpha$. If $f \in L^2(\T^2)$ will denote the k'th coefficient of the complex fourier series with $f_k$
\begin{equation*}
f_k = \int_{\T^2} e^{-i k x}f(x) \dd x
\end{equation*}
where $kx$ is shorthand for $k \cdot x$. For a set $C \subset \Z^2$ we define the Fourier projection multiplier
\begin{equation*}
P_Cf(x) = \sum_{k \in C}f_ke^{ik\cdot x}.
\end{equation*}
If $M \geq 0$ then $P_{\gtrsim M}$ denotes projection onto frequencies $k$ where $|k|\gtrsim M$ and $P_{\sim M} $ denotes projection onto frequencies $k$ where $|k| \sim M$.

\section{Strichartz Estimates}\label{Sec:StrichartzEstimates}
Before we state and prove Strichartz estimates we will motivate our approach, which is heavily inspired by Herr and Kwak's proof of Strichartz estimates for NLS on $\T^2$ in \cite[Theorem 1.2]{IncidenceGeometryStrichartz}. Along the way we will define some notation and highlight some key lemmas that are used in the proof. First we discuss the relationship between oscillatory integrals of the form $\int_0^T e^{iW_t \tau} \dd t$ and the Strichartz estimates we prove in Theorem \ref{Thm:PathwiseStrichartz}.  We rewrite $\|e^{iW_t\Delta}f\|_{L^4(\T^2)}$ in terms of the coefficients $f_k$ of the Fourier series of $f$.
\begin{align}
&\int_{0}^T \int_{\T^2} |e^{-iW_t\Delta}f|^4 \dd x \dd t \notag\\
&=
\sum_{k_1-k_2+k_3-k_4=0}f_{k_1}\overline{f_{k_2}}f_{k_3}\overline{f_{k_4}}\int_{0}^T e^{iW_t(|k_1|^2-|k_2|^2+|k_3|^2-|k_4|^2)} \dd t. \label{Intermediary21}
\end{align}
Let $\tau = |k_1|^2-|k_2|^2+|k_3|^2-|k_4|^2 $.  In the case where $W_t=t$ the integral of $e^{it\tau}$ with respect to $t$ vanishes on the interval $[0,2\pi]$ except when $\tau=0$. In this case one can estimate \eqref{Intermediary21} by counting the number of $4$-tuples $(k_1,k_2,k_3,k_4)$ of lattice points where $k_1-k_2+k_3-k_4=0$ and $\tau=0$. In Bourgain's seminal paper \cite{bourgain93} he uses the "circle method" to do this and obtain Strichartz estimates for the periodic Schr\"{o}dinger propagator $\{e^{it\Delta}\}_{t=0}$. When $W_t$ is a more general modulation this approach is not available. Instead of using that the integral of $e^{it\tau}$ vanishes when $\tau \neq 0$ we use that the integral of $e^{iW_t\tau}$ is small when $\tau$ is large. First, we group terms on the basis of the value of $\tau$ and define some notation to be used later.
 
\begin{df}\label{Defn:QTau}
Define $\mathcal{Q}$ to be
\begin{equation*}
\mathcal{Q}:=\left\{Q = (k_1, k_2, k_3, k_4)\in \Z^{2 \times 4} \,:\, k_1-k_2+k_3-k_4=0\right\}.   
\end{equation*}
\end{df}
The elements of $\mathcal{Q}$ are ordered $4$-tuples of vertices of parallelograms in the integer lattice. For this reason we will refer to them as parallelograms. For such a parallelogram we set $\tau_{Q}:= |k_1|^2-|k_2|^2+|k_3|^2-|k_4|^2 $ and $Q^{\tau}:= \{Q \in \mathcal{Q}: \tau_{Q}=\tau\}$. If $f$ is a function on the integer lattice, for any parallelogram $Q=(k_1, k_2, k_3, k_4)$ we set $f(Q) = f(k_1)\overline{f(k_2)}f(k_3)\overline{f(k_4)}$. 

\begin{rem}\label{Rem:TauInvarianceUnderTranslation}
The quantity $\tau_{Q}$ is invariant under translations of the  parallelogram $Q$. In particular if we are considering only parallelograms with vertices in some given set $C$, the set $\{\tau_{Q}:Q \subset C\}$ is invariant under translations of $C$.
\end{rem}
Using this notation we can rewrite \eqref{Intermediary21} as
\begin{align}\label{Exp:MotivationForIrregularity}
\sum_{\tau \in \Z} \sum_{Q \in Q^{\tau}} \hat{f}(Q) \int_0^T e^{iW_t\tau}\dd t.
\end{align}
In order to bound this quantity one needs to control $\int_0^T e^{iW_t \tau} \dd t$ and $\sum_{Q \in Q^{\tau}} \hat{f}(Q)$. To handle the latter we use the  following two propositions, the first of which is a result of Herr and Kwak, and the second of which is a standard corollary of the Szemeredi-Trotter theorem.
\begin{prop}\cite[Proposition 3.1]{IncidenceGeometryStrichartz}\label{Prop:PropositionForStrichartz}

Let $f: \Z^2 \rightarrow [0, \infty) $ be a function of the form
\[
f = \sum_{j=0}^m \lambda_j 2^{-j/2} 1_{S_j}
\]
where $S_0, \ldots, S_m$, $m \geq 1$ are disjoint subsets of $\Z^2$ such that $\big{|}S_j\big{|} \leq 2^j$, and $\lambda_0, \ldots, \lambda_m \geq 0$. Suppose that for each $j = 0, \ldots, m$ and $\xi \in S_j$, there exists at most one line $\ell \supset \xi $ such that $\big{|}(\ell \cap S_j)\big{|} \geq 2^{j/2 + C}$ for some fixed $C$. Then, we have
\begin{equation*}
    \sum_{Q \in \Q^0} f(Q) \lesssim m \cdot \|\lambda_j\|_{\ell^2_{j \leq m}}^4 := m\left(\sum_{j \leq m}|\lambda_j|^2\right)^2,
\end{equation*}
and
\begin{equation*}
\sup_{M \in 2^{\N}} \frac{1}{M} \sum_{ \tau \approx M}\sum_{Q \in \Q^{\tau}}f(Q) \lesssim \|\lambda_j\|_{\ell^2_{j \leq m}}^4 := \left(\sum_{j \leq m}|\lambda_j|^2\right)^2.
\end{equation*}
\end{prop}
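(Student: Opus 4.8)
The plan is to expand the weighted sum into dyadic pieces and reduce the whole estimate to a point--line incidence count, keeping the degenerate pairings of vertices separate from the genuinely two--dimensional rectangles. Writing a parallelogram as $Q=(k_1,k_2,k_3,k_4)$ with the redundant vertex eliminated, $k_4=k_1-k_2+k_3$, a direct expansion gives the identity $\tau_Q=2(k_1-k_2)\cdot(k_2-k_3)$, so that $Q\in\Q^0$ precisely when the two consecutive side vectors $a=k_1-k_2$ and $b=k_2-k_3$ are orthogonal; in other words $\Q^0$ is the set of (possibly degenerate) rectangles. Since $f\ge 0$ one has $f(Q)=\prod_i f(k_i)$, and inserting $f=\sum_j\lambda_j 2^{-j/2}1_{S_j}$ lets me organize the sum according to which dyadic set $S_{j_i}$ each vertex lies in. The estimate then reduces to bounding, for each fixed tuple of scales $(j_1,j_2,j_3,j_4)$, the number of rectangles with $k_i\in S_{j_i}$, against the weight $2^{-(j_1+j_2+j_3+j_4)/2}$.

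First I would isolate the degenerate rectangles, namely the pairings $k_1=k_2,\ k_3=k_4$ and $k_1=k_4,\ k_2=k_3$ (together with the fully collapsed configurations). Their total contribution is at most $\|f\|_{\ell^2}^4$, and because the $S_j$ are disjoint with $|S_j|\le 2^j$ we have $\|f\|_{\ell^2}^2=\sum_j\lambda_j^2 2^{-j}|S_j|\le\sum_j\lambda_j^2$. Hence the degenerate part is already $\lesssim(\sum_j\lambda_j^2)^2$, which is the desired main term appearing in both displayed inequalities. Everything beyond this is the count of non--degenerate rectangles, and this is where the incidence geometry does the real work.

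For the non--degenerate contribution I would recast the count as a point--line incidence problem. Fixing a side, i.e.\ grouping the ordered pairs $(k_1,k_2)$ by the line they span and the direction $a=k_1-k_2$, the orthogonality constraint $a\perp b$ forces the completing vertices to satisfy $k_3=k_2-b$ and $k_4=k_1-b$ with $b$ perpendicular to $a$; thus $k_3$ must lie on the line through $k_2$ perpendicular to $a$, and the count of rectangles becomes a weighted count of incidences between the points of $S_j$ and this family of perpendicular lines, with an auxiliary translation--membership condition. The Szemer\'edi--Trotter theorem controls such incidences by $O(P^{2/3}L^{2/3}+P+L)$, and its extremizers are exactly the configurations carrying many collinear points. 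This is precisely where the hypothesis enters: a line is declared rich when it meets $S_j$ in at least $2^{j/2+C}$ points, and the assumption that each $\xi\in S_j$ lies on at most one rich line means the rich lines cannot cluster the points, so their contribution can be summed directly while the poor lines fall into the regime where Szemer\'edi--Trotter yields the clean square--root--type savings. Carried out scale by scale and recombined via Cauchy--Schwarz in the $\lambda_j$, this produces $(\sum_j\lambda_j^2)^2$ at each dyadic level.

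Finally I would assemble the scales. In the exact--resonance case $\tau=0$ the energy constraint is rigid, and controlling the non--degenerate count forces one to sum the per--level incidence bounds across the full dyadic range of scales, which costs a single logarithmic factor and yields $\sum_{Q\in\Q^0}f(Q)\lesssim m(\sum_j\lambda_j^2)^2$. For the second estimate the averaging $\frac{1}{M}\sum_{\tau\approx M}$ distributes the count over a block of $\tau$--values of width $\sim M$; since $\tau_Q=2a\cdot b$ sweeps through such a block as $b$ ranges over an interval of the relevant line, the averaging supplies exactly the gain that had to be paid for by hand when $\tau=0$, so the factor $m$ is absent and one obtains the clean bound $(\sum_j\lambda_j^2)^2$. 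I expect the main obstacle to be the incidence step itself: faithfully encoding the simultaneous additive constraint $k_1-k_2+k_3-k_4=0$ and the orthogonality $a\perp b$ as a single point--line incidence count, and then exploiting the one--rich--line--per--point hypothesis to neutralize the Szemer\'edi--Trotter extremizers without forfeiting the sharp exponent. By comparison the dyadic bookkeeping and the $\tau$--averaging should be routine once the incidence bound is in place.
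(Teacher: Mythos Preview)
The paper does not prove this proposition at all: it is quoted verbatim as \cite[Proposition 3.1]{IncidenceGeometryStrichartz} and used as a black box in the proof of Theorem~\ref{Thm:PathwiseStrichartz}. So there is no ``paper's own proof'' to compare against.

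That said, your sketch is in the spirit of the Herr--Kwak argument. The identity $\tau_Q=2(k_1-k_2)\cdot(k_2-k_3)$ and the identification of $\Q^0$ with rectangles are correct, the degenerate pairings are handled cleanly, and the idea of feeding the non-degenerate rectangle count into Szemer\'edi--Trotter with the rich-line hypothesis suppressing the extremizers is exactly the mechanism that drives the result. Your explanation of why the $\tau=0$ case picks up the factor $m$ while the averaged estimate does not is also on target.

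Where the sketch is thin is precisely where you flag it: the reduction of the rectangle count to a point--line incidence count is more delicate than one paragraph suggests. In the actual argument one needs to track, for each pair of dyadic levels, how many rectangles have a given side on a rich versus poor line, and the ``at most one rich line per point'' hypothesis is used to bound the rich-line contribution by a direct Cauchy--Schwarz along those lines rather than by Szemer\'edi--Trotter. The poor-line contribution is then handled by the trivial incidence bound (each poor line meets $S_j$ in $\lesssim 2^{j/2}$ points), not the full $P^{2/3}L^{2/3}$ term. Your proposal conflates these two regimes slightly, but the overall architecture is right; filling in the details would recover the Herr--Kwak proof.
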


\begin{prop}\cite[Corollary 8.5]{tao-vu-additive}
\label{prop:SzTr}Let $S\subset\R^{2}$ be a set of $n$ points,
where $n\in\N$. Let $k\geq2$ be an integer. The number $m$ of lines
in $\R^{2}$ passing through at least $k$ points of $S$ satisfies the estimate 
\begin{equation*}
m\lesssim\frac{n^{2}}{k^{3}}+\frac{n}{k}.\label{eq:SzTr}
\end{equation*}
\end{prop}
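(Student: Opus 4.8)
The plan is to deduce this bound from the Szemer\'edi--Trotter incidence theorem, which states that for a set of $n$ points and a set of $m$ lines in $\R^2$ the number $\I$ of incident point--line pairs (i.e.\ pairs consisting of one of the points lying on one of the lines) satisfies
\[
\I \lesssim (nm)^{2/3} + n + m.
\]
I would take this as a black box (it is proved, e.g.\ via the crossing number inequality, in \cite{tao-vu-additive}); the corollary is then a short double-counting argument followed by an elementary optimization.

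First I would set up the count. Let $L$ be the collection of lines passing through at least $k$ points of $S$, and write $m = |L|$. Each line in $L$ is incident to at least $k$ of the points, so the number of incidences between $S$ and $L$ is at least $mk$. Applying Szemer\'edi--Trotter to the point set $S$ (of cardinality $n$) and the line set $L$ (of cardinality $m$) then gives
\[
mk \lesssim (nm)^{2/3} + n + m.
\]

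The remaining task is to solve this inequality for $m$, and this is the only place where care is needed, chiefly in disposing of the linear term $m$ on the right. Write the inequality with an explicit constant, $mk \leq C\big((nm)^{2/3} + n + m\big)$. If $k \leq 2C$, then $k$ is bounded by an absolute constant and the trivial estimate $m \leq \binom{n}{2} \lesssim n^2 \lesssim n^2/k^3$ already yields the claim (any two points determine at most one line). If instead $k > 2C$, then $Cm \leq \tfrac12 mk$, so this term may be absorbed into the left-hand side to leave $\tfrac12 mk \leq C\big((nm)^{2/3} + n\big)$. Now I split into two cases according to which surviving term dominates. If $(nm)^{2/3} \geq n$, then $mk \lesssim (nm)^{2/3}$, and dividing by $m^{2/3}$ and cubing gives $m \lesssim n^2/k^3$. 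If instead $n > (nm)^{2/3}$, then $mk \lesssim n$, i.e.\ $m \lesssim n/k$. In either case $m \lesssim n^2/k^3 + n/k$, as claimed.

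The main obstacle is purely the bookkeeping of this case analysis, and in particular the absorption of the linear term $m$; this is precisely the step that uses the hypothesis $k \geq 2$ (more exactly, the small-$k$ range is handled by the trivial bound, while for $k$ exceeding the Szemer\'edi--Trotter constant the linear term is absorbed). There is no geometric difficulty beyond invoking Szemer\'edi--Trotter itself: the entire content of the proposition is the incidence theorem together with the elementary optimization above.
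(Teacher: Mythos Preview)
Your argument is correct and is exactly the standard deduction of this corollary from the Szemer\'edi--Trotter theorem. The paper does not actually prove this proposition; it simply cites it as \cite[Corollary~8.5]{tao-vu-additive} and uses it as a black box, so there is no paper proof to compare against beyond noting that the reference you invoke is the same one the paper cites.
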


The other factor in \eqref{Exp:MotivationForIrregularity} is the oscillatory integral $\int_0^T e^{iW_t\tau} \dd t$. The decay properties of this expression have been formalized in \cite{chouk2015nonlinearpdesmodulateddispersion,catellier2016averagingirregularcurves, galeati2023prevalencerhoirregularity}  with the notion of $(\rho, \gamma)$-irregularity.
\begin{df}\label{Defn:RhoGammaIrregular}
Let $W_t \in C^0([0,T], \R)$. $W_t$ is called $(\rho, \gamma)$ irregular if
\begin{equation*}
\sup_{s<t}\sup_{\tau \in \R}\frac{(1+|\tau|)^{\rho}|\int_s^t e^{iW_u\tau} \dd u|}{|s-t|^{\gamma}} < \infty.
\end{equation*}
\end{df}
The Strichartz estimates we prove hold in the case when $W_t$ is $(\rho, \gamma)$-irregular for all $\rho < 1$ and $\gamma = \frac{1}{2}$. The relevance of these values is highlighted by Theorem 1.4 in \cite{catellier2016averagingirregularcurves}, which we report below for completeness.
\begin{thm}\label{Thm:BrownianIsRhoIrregular}
Let $(B^H_t)_{t > 0}$ be a fractional Brownian motion of Hurst index $H \in (0,1)$. Then for any $\rho < 1/2H$ there exist $\gamma > 1/2$ so that with probability one the sample paths of $B^H$ are $(\rho, \gamma)$ irregular.
\end{thm}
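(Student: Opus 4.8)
The plan is to realize the quantity $\Phi_{s,t}(\tau):=\int_s^t e^{i\tau B^H_u}\dd u$ as the (conjugate) Fourier transform of the occupation measure of $B^H$ on $[s,t]$, and to exploit the Gaussianity and \emph{strong local nondeterminism} of fractional Brownian motion to extract decay in $\tau$. The first reduction is a scaling identity: since $B^H$ is self-similar of index $H$ with stationary increments, factoring the phase $e^{i\tau B^H_s}$ out of the modulus gives, for $L=t-s$,
\[
|\Phi_{s,t}(\tau)| \overset{d}{=} L\,\Big|\int_0^1 e^{i\tau L^H B^H_v}\dd v\Big|,
\]
so that $\E\big[|\Phi_{s,t}(\tau)|^{2n}\big]=(t-s)^{2n}\,g_n\big(|\tau|(t-s)^H\big)$, where $g_n(\lambda):=\E\big[|\Phi_{0,1}(\lambda)|^{2n}\big]$. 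It therefore suffices to estimate the single scalar profile $g_n$, and $g_n(\lambda)\le 1$ is automatic.

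Second, I would compute $g_n(\lambda)$ through the Gaussian identity
\[
g_n(\lambda)=\int_{[0,1]^{2n}}\exp\!\Big(-\tfrac{\lambda^2}{2}\,\mathrm{Var}\big(\textstyle\sum_{j=1}^{2n}\varepsilon_j B^H_{u_j}\big)\Big)\dd u,
\]
where exactly $n$ of the signs $\varepsilon_j$ are $+$ and $n$ are $-$ (coming from $|\Phi|^{2n}=\Phi^n\overline{\Phi}^{\,n}$). I would reorder the $2n$ time variables into $w_1<\cdots<w_{2n}$, summing over the finitely many induced sign patterns $\delta_k\in\{\pm1\}$, and apply strong local nondeterminism in the form $\mathrm{Var}\big(\sum_k \delta_k B^H_{w_k}\big)\gtrsim \sum_{k\geq2}(w_k-w_{k-1})^{2H}$. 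Integrating the Gaussian factors gap by gap, using $\int_0^1 e^{-c\lambda^2 r^{2H}}\dd r\lesssim(1+\lambda)^{-1/H}$, yields $g_n(\lambda)\lesssim_n (1+\lambda)^{-n/H}$ (one may in fact extract up to $2n-1$ gaps, but $n$ already suffice). Feeding this back through the scaling identity and interpolating with $g_n\le 1$ produces, for every $\rho<\tfrac1{2H}$ and $\gamma:=1-\rho H\in(\tfrac12,1)$, the uniform estimate $\E\big[\big((1+|\tau|)^{\rho}|\Phi_{s,t}(\tau)|\big)^{2n}\big]\lesssim_{n,\rho}(t-s)^{2n\gamma}$ for all $s<t$, $\tau\in\R$. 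The constraint $\rho<\tfrac1{2H}$ is exactly what makes the frequency weight absorbable ($2n\rho<n/H$) while leaving a time exponent $2n\gamma>n$, that is $\gamma>\tfrac12$.

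Third, I would upgrade this per-point moment bound to the almost-sure uniform supremum of Definition \ref{Defn:RhoGammaIrregular}. Since $\Phi_{s,t}(\tau)=\Phi_{0,t}(\tau)-\Phi_{0,s}(\tau)$ is additive in time, a Kolmogorov/Garsia--Rodemich--Rumsey argument applied to $t\mapsto(1+|\tau|)^{\rho}\Phi_{0,t}(\tau)$ gives, for fixed $\tau$, Hölder regularity of exponent $\gamma-\tfrac1{2n}$, which remains above $\tfrac12$ once $n$ is large. To make the estimate simultaneously uniform over the noncompact range $\tau\in\R$, I would run a chaining/net argument: on each dyadic block $|\tau|\sim2^{j}$ place a net whose spacing is dictated by the deterministic Lipschitz bound $|\Phi_{s,t}(\tau)-\Phi_{s,t}(\tau')|\leq|\tau-\tau'|\,(t-s)\sup_{[0,T]}|B^H|$, control the net by Markov's inequality together with the moment estimate above, and sum the resulting probabilities over $j$ and over a countable mesh of pairs $s<t$ by Borel--Cantelli, taking $n$ large enough for summability. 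The weight $(1+|\tau|)^{\rho}$ together with the $\tau$-decay of the moments is precisely what renders the contribution of high frequencies summable.

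The main obstacle I anticipate is the variance lower bound: verifying that strong local nondeterminism survives the reordering of the $2n$ time points with their induced signs and still yields genuinely many ($\gtrsim n$) gap contributions of size $(w_k-w_{k-1})^{2H}$. This is the step that both injects the Hurst index $H$ and pins down the sharp threshold $\rho<\tfrac1{2H}$; everything downstream is interpolation and a routine but lengthy two-parameter continuity argument whose only real subtlety is the unboundedness of $\tau$.
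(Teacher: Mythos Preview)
The paper does not give its own proof of this theorem: it is quoted verbatim as Theorem~1.4 of \cite{catellier2016averagingirregularcurves} and used as a black box, so there is nothing in the present paper to compare your argument against. Your outline is in fact a faithful reconstruction of the Catellier--Gubinelli proof: reduce to moments of $\Phi_{0,1}(\lambda)$ by self-similarity and stationarity of increments, control those moments via the strong local nondeterminism of fractional Brownian motion (the Pitt/Berman bound $\mathrm{Var}\big(\sum_k \delta_k B^H_{w_k}\big)\gtrsim\sum_{k\ge2}(w_k-w_{k-1})^{2H}$ for ordered times), and then upgrade the pointwise moment bound to an almost-sure uniform supremum over $(s,t,\tau)$ by a Kolmogorov/GRR argument in time combined with a dyadic net in $\tau$ and Borel--Cantelli.

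One small caveat on your sketch: the claim ``one may in fact extract up to $2n-1$ gaps, but $n$ already suffice'' undersells what is needed. To obtain $g_n(\lambda)\lesssim_n(1+\lambda)^{-n/H}$ you genuinely need at least $n$ independent Gaussian factors after integration (each gap contributes $\lambda^{-1/H}$), and the strong LND inequality for fBm does supply all $2n-1$ gaps; with only the weakest LND (a single gap) you would get $\lambda^{-1/H}$ and the argument would not close for large $n$. So the ``main obstacle'' you flag is real but is handled by the known strong LND for fBm rather than by any ad hoc reordering argument. Otherwise the plan is sound.
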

It follows that the sample paths of $B^H_t$ are $(\rho, \gamma)$ irregular for all $\rho < 1$ when $H \leq 1/2$. In particular this includes the case of a standard Brownian motion where $H = 1/2$. Note that if $W_t$ is $(\rho, \gamma)$ irregular for $\gamma > 1/2 $ then it is $(\rho, 1/2)$-irregular when restricted to bounded intervals. For example, a fractional Brownian motion with Hurst index $H \leq 1/2$ is $(\rho, 1/2)$-irregular for all $\rho < 1$. 

With this notation in mind we prove Strichartz estimates for the propagator $\{e^{iW_t\Delta}\}_{t\geq 0}$ on $\T^2$. As mentioned in the beginning of this section, the proof is inspired Herr and Kwak's use of incidence geometry for Strichartz estimates for the cubic NLS on $\T^2$.

\begin{thm}\label{Thm:PathwiseStrichartz} The following two types of Strichartz estimates hold
\begin{enumerate}
    \item[1][\textbf{Pathwise Strichartz Estimates}] Let $\epsilon > 0$ and $W_t:[0,\infty) \rightarrow \R$ be a $(\rho, 1/2)$-irregular function as in Definition \ref{Defn:RhoGammaIrregular}. For all rectangles $S = \{(\xi_1, \xi_2): |\xi_1-a|,|\xi_2-b| \leq N\} \subset \Z^2$, intervals $I$ such that $|I| =T_0$, and $f \in L^2(\T^2)$ we have \[
 \|e^{iW_t\Delta}P_Sf\|_{L^4(I \times \T^2)} \lesssim_{W_t, \epsilon} (N^{2\epsilon}\sqrt{T_0})^{1/4}\|f\|_{L^2(\T^2)}.
 \]

    \item[2][\textbf{Stochastic Strichartz Estimates}] Let $W_t = B^H_t$ be a fractional Brownian motion with Hurst index $H \leq \frac{1}{2}$ defined on a probability space $(\Omega, \FF, P)$. For all rectangles $S = \{(\xi_1, \xi_2): |\xi_1-a|,|\xi_2-b| \leq N\} \subset \Z^2$, intervals $I$ such that $|I| =T_0$, and $f \in L^2(\Omega \times \T^2)$ with Fourier coefficients that are independent of the Brownian increments we have
\[
 \|e^{iW_t\Delta}P_Sf\|_{L^4(\Omega \times I \times \T^2)} \lesssim_H (\log(N)T_0 + T_0^{1-H})^{1/4}\|f\|_{L^4(\Omega)L^2(\T^2)}.
\]
\end{enumerate} 
\end{thm}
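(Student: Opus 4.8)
The plan is to estimate the fourth power $\|e^{iW_t\Delta}P_Sf\|_{L^4(I\times\T^2)}^4$ through the Fourier-side expansion of \eqref{Exp:MotivationForIrregularity}, with the time integral taken over $I$ and the parallelogram sum restricted to $Q\subset S$; since $P_Sf$ involves only finitely many Fourier modes every interchange is legitimate. This displays the quantity as $\sum_{\tau\in\Z}\bigl(\sum_{Q\in\Q^{\tau},\,Q\subset S}\hat f(Q)\bigr)\int_I e^{iW_t\tau}\dd t$, so the whole problem splits into controlling the parallelogram sum $\sum_{Q\in\Q^{\tau}}\hat f(Q)$ and the oscillatory integral $\int_I e^{iW_t\tau}\dd t$. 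For the first factor I would pass to absolute values, $\bigl|\sum_{Q\in\Q^{\tau}}\hat f(Q)\bigr|\le\sum_{Q\in\Q^{\tau}}|f_{k_1}||f_{k_2}||f_{k_3}||f_{k_4}|$, replacing $f$ by the nonnegative function $k\mapsto|f_k|$, and then decompose this function into dyadic magnitude level sets, re-indexed by the logarithm of their cardinality, so as to match the hypotheses of Proposition \ref{Prop:PropositionForStrichartz}. With this (magnitude-based) decomposition one has $m\lesssim\log N$ relevant scales and $\|\lambda_j\|_{\ell^2}\sim\|f\|_{L^2(\T^2)}$, and the structural hypothesis that each frequency lies on at most one ``rich'' line $\ell$ with $|\ell\cap S_j|\ge 2^{j/2+C}$ is arranged, up to harmless logarithmic refinements, by using the Szemer\'edi--Trotter bound of Proposition \ref{prop:SzTr} to count rich lines, following Herr and Kwak. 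Proposition \ref{Prop:PropositionForStrichartz} then supplies the two inputs I need: $\sum_{Q\in\Q^{0}}|f|(Q)\lesssim\log N\,\|f\|_{L^2}^4$ on the diagonal, and $\sum_{\tau\approx M}\sum_{Q\in\Q^{\tau}}|f|(Q)\lesssim M\,\|f\|_{L^2}^4$ on each dyadic block.

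For Part 1 the oscillatory factor is controlled pathwise: by Definition \ref{Defn:RhoGammaIrregular}, $(\rho,1/2)$-irregularity gives $\bigl|\int_I e^{iW_t\tau}\dd t\bigr|\lesssim_{W_t}(1+|\tau|)^{-\rho}\sqrt{T_0}$. Multiplying by the parallelogram bounds, the block $\tau\approx M$ contributes $\lesssim\sqrt{T_0}\,M^{-\rho}\cdot M\,\|f\|_{L^2}^4$. Because $\tau_Q$ is translation invariant (Remark \ref{Rem:TauInvarianceUnderTranslation}) I may center $S$ at the origin, after which every $Q\subset S$ obeys $|\tau_Q|\lesssim N^2$, so $M$ runs over $1\le M\lesssim N^2$; since $\rho<1$ the geometric sum $\sum_{M\lesssim N^2}M^{1-\rho}$ is dominated by its top term $\sim N^{2(1-\rho)}$, and the $\tau=0$ term adds only $\sqrt{T_0}\log N\,\|f\|_{L^2}^4$. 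Taking $\rho=1-\epsilon$, which is admissible since irregularity is assumed for every $\rho<1$ (and holds for fractional Brownian motion with $H\le 1/2$ by Theorem \ref{Thm:BrownianIsRhoIrregular}), produces $\|e^{iW_t\Delta}P_Sf\|_{L^4(I\times\T^2)}^4\lesssim_{W_t,\epsilon}N^{2\epsilon}\sqrt{T_0}\,\|f\|_{L^2}^4$, i.e.\ the claimed estimate after a fourth root.

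For Part 2 I would integrate the same expansion over $\Omega$. The decisive structural fact is that the Fourier coefficients of $f$ are independent of the Brownian increments, so each summand factors as $\E\bigl[\sum_{Q\in\Q^{\tau}}\hat f(Q)\bigr]\cdot\E\bigl[\int_I e^{iB^H_t\tau}\dd t\bigr]$; bounding the first expectation by $\E\sum_{Q\in\Q^{\tau}}|f|(Q)$ reduces it to the averaged parallelogram bounds above, with $\E\|\lambda_j\|_{\ell^2}^4\sim\|f\|_{L^4(\Omega)L^2(\T^2)}^4$. The oscillatory factor is now governed by the Gaussian law of $B^H_t$ (mean zero, variance $t^{2H}$): one computes $\E[e^{iB^H_t\tau}]=e^{-\tau^2 t^{2H}/2}$, whence $\bigl|\E\int_I e^{iB^H_t\tau}\dd t\bigr|\le\int_I e^{-\tau^2 t^{2H}/2}\dd t\lesssim_H\min(T_0,|\tau|^{-1/H})$, the extremal interval being $I=[0,T_0]$. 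As $H\le 1/2$ forces $1/H\ge 2$, this decay is far stronger than the pathwise one. Summing dyadically, the block $\tau\approx M$ contributes $\lesssim M\min(T_0,M^{-1/H})\|f\|_{L^4(\Omega)L^2}^4$; splitting at $M\sim T_0^{-H}$, the low range (minimum equal to $T_0$) and the high range (minimum equal to $M^{-1/H}$, with negative exponent $1-1/H$) both balance to $T_0^{1-H}$, while the $\tau=0$ term contributes $\log(N)\,T_0$. This gives $\|e^{iB^H_t\Delta}P_Sf\|_{L^4(\Omega\times I\times\T^2)}^4\lesssim_H(\log(N)T_0+T_0^{1-H})\|f\|_{L^4(\Omega)L^2(\T^2)}^4$, as required.

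The two oscillatory-integral bounds are the clean, essentially self-contained new ingredients; I expect the main obstacle to lie in rigorously matching the parallelogram sums to Proposition \ref{Prop:PropositionForStrichartz}. One must construct the level-set decomposition so that the ``one rich line per frequency'' hypothesis genuinely holds---this is the geometric crux, requiring the Szemer\'edi--Trotter count of Proposition \ref{prop:SzTr} together with a pigeonholing that keeps $m\lesssim\log N$---and, for Part 2, this decomposition together with $m$ and the masses $\|\lambda_j\|_{\ell^2}$ all depend on the sample point $\omega$, so I would need these bounds to hold uniformly in, or be integrable over, $\Omega$ in order to arrive at the norm $\|f\|_{L^4(\Omega)L^2(\T^2)}$; confirming this $\omega$-uniformity, and that independence is exactly what licenses factoring the expectation, is where I would concentrate the most care.
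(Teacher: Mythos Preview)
Your strategy matches the paper's proof essentially step for step: the Fourier expansion \eqref{Exp:MotivationForIrregularity}, passage to nonnegative coefficients, the level-set decomposition feeding Proposition~\ref{Prop:PropositionForStrichartz}, the dyadic sum in $\tau$ up to $N^2$, and the two oscillatory-integral bounds (the $(\rho,1/2)$-irregularity for Part~1, and $\E[e^{iB^H_t\tau}]=e^{-\tau^2 t^{2H}/2}$ with the $\min(T_0,|\tau|^{-1/H})$ estimate for Part~2) are all exactly what the paper does, including the balancing at $M\sim T_0^{-H}$.

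The one place where your description is slightly off is the mechanism for the ``one rich line per frequency'' hypothesis. It is not achieved by a pigeonholing or a logarithmic refinement of a single decomposition. Rather, given the level sets $S_j^0$, one uses Szemer\'edi--Trotter (Proposition~\ref{prop:SzTr}) to show that the set $E_j\subset S_j^0$ of points lying on \emph{two} rich lines has $|E_j|\lesssim 2^{j-2C}$, hence carries only a $2^{-C}$-fraction of the $\ell^2$ mass. One then \emph{removes} $E=\bigcup_j E_j$, applies Proposition~\ref{Prop:PropositionForStrichartz} to the good part $h_0=f-f1_E$, and iterates on the remainder $f_1:=f1_E$; the geometric decay $\|f_{n+1}\|_{\ell^2}\le\tfrac12\|f_n\|_{\ell^2}$ lets you sum $f=\sum_n h_n$. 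This is the Herr--Kwak device you allude to, and it works verbatim here. For Part~2 your concern about $\omega$-dependence is well placed and resolved exactly as you suspect: the entire construction $(f_n,g_n,h_n,\lambda_j)$ is a measurable function of $f$ alone, so it inherits independence from the Brownian increments, the bounds $\|\lambda_j\|_{\ell^2}\lesssim\|f_n\|_{\ell^2}\le 2^{-n}\|f\|_{\ell^2}$ hold pointwise in $\omega$, and taking expectations gives $\E\|\lambda_j\|_{\ell^2}^4\lesssim 2^{-4n}\|f\|_{L^4(\Omega)L^2(\T^2)}^4$.
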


\begin{rem}
Notice that the stochastic Strichartz estimate is much stronger than the pathwise Strichartz estimate. This comes from the fact that the pathwise Strichartz estimate utilize the decay in $\tau$ of expressions of the form $\int_0^{T_0} e^{-iW_t \tau} \dd t$ while the stochastic Strichartz estimate utilizes the decay in $\tau $ of the expectation of the integral $\int_0^{T_0} E[e^{-iB^H_t \tau}] \dd t = \int_0^{T_0} e^{-\frac{\tau^2}{2}t^{2H}} \dd t $. Replacing $\tau$ with $\tau^2$ makes this integral much smaller for larger $\tau$. 
\end{rem}

\begin{proof} 
It suffices to show that for $f:\Z^{2}\rightarrow \C$ supported
in $S$,
\begin{equation*}
\|e^{iW_t\Delta}\F^{-1}f\|_{L_{t,x}^{4}([0,T]\times\T^{2})}\lesssim (N^{2\epsilon}\sqrt{T_0})^{1/4}\|f\|_{\ell^{2}(\Z^{2})}.
\end{equation*}
The function $f$ can be separated into its real and imaginary parts. Then we can restrict these functions to sets where they are nonpositive or nonnegative. As a result we may assume without loss of generality that $f$ is a nonnegative real valued function. This will allow us to group together values of $\xi \in \Z^2$ where $f$ is large or small. We define a sequence $\left\{ f_{n}\right\}_{n=0}^{n=\infty} $ of functions $f_{n}:\Z^{2}\rightarrow[0,\infty)$ such that $\supp(f_{n})\subset S$. From this sequence we will build $\{g_n\}$ and $\{h_n\}$, two additional auxiliary sequences of functions. Let $f_{0}:=f$. Given $n\in\N$ and a function $f_{n}$ we describe a process that can be used to construct $f_{n+1}$. We choose an enumeration $\xi_{1},\xi_{2},\ldots$ in $S \subset \Z^{2}$ (which
may depend on $n$) such that $f_{n}(\xi_{1})\ge f_{n}(\xi_{2})\ge\ldots$.
Let $S_{j}^{0}:=\left\{ \xi_{2^{j}},\ldots,\xi_{2^{j+1}-1}\right\} $
and $\lambda_{j}:=2^{j/2}f_n(\xi_{2^{j}})$ for $j=0,\ldots,m \approx \log(N^2)$. We have
\begin{equation}\label{Intermediary24}
|S_{j}^{0}|= |\{\xi_{2^{j}}, \ldots, \xi_{2^{j+1}-1}\}| =2^{j},
\end{equation}
and
\begin{align}
\|\lambda_{j}\|_{\ell_{j\leq m}^{2}}
&=
\|2^{j/2}f_{n}(\xi_{2^{j}})\|_{\ell_{j\le m}^{2}}\label{Intermediary13} \\
&\lesssim
f_{n}(\xi_{1})+\|\sum_{j=1}^{m}f_{n}(\xi_{2^{j}})1_{\left\{ \xi_{2^{j}},\ldots,\xi_{2^{j}-1}\right\} }\|_{\ell^{2}(\Z^{2})}\nonumber 
\\
&\lesssim 
\|f_{n}\|_{\ell^{2}(\Z^{2})}. \nonumber 
\end{align}

Let $C$ be a positive constant to be chosen later. For $j=0,\ldots,m$, we define $E_{j}\subset S_{j}^{0}$ as the set
of intersections $\xi\in S_{j}^{0}$ of two lines $\ell_{1},\ell_{2}$
such that
\[
\big{|}\ell_{1}\cap S_{j}^{0}\big{|}, \big{|}\ell_{2}\cap S_{j}^{0} \big{|}
\geq 2^{j/2+C}.
\]
By Proposition \ref{prop:SzTr} we have
\begin{align}
\sqrt{|E_{j}|} & \leq 
\bigg{|}\{ \ell\subset\R^{2}:\ell\text{ is a line and }\big{|}\ell\cap S_{j}^{0}\big{|} \geq 2^{j/2+C}\} \bigg{|} \label{eq:E_j bound} \\
& \lesssim (\big{|}S_{j}^{0}\big{|})^{2}/(2^{j/2+C})^3+\big{|}S_{j}^{0}\big{|}/2^{j/2+C}\nonumber \\
 & \lesssim 2^{j/2-C}.\nonumber 
\end{align}

Let $f_{n+1}:\Z^{2}\rightarrow[0,\infty)$ be the function
\[
f_{n+1}:=f_{n}1_{E}, \;\; \text{where} \;\; E:=\bigcup_{j=0}^{m}E_{j}.
\]
Since $f_{n}(\xi)\le f_{n}(\xi_{2^{j}})=\lambda_{j}2^{-j/2}$ holds for
$\xi\in E_{j}\subset S_{j}^{0}$, by \eqref{eq:E_j bound} and \eqref{Intermediary13},
we have
\[
\|f_{n+1}\|_{\ell^{2}(\Z^{2})}=\|f_{n}{1_{E}\|_{\ell^{2}(\Z^{2})}}\lesssim \|\lambda_{j}2^{-j/2}\cdot\sqrt{\big{|}E_{j}}\big{|}\|_{\ell_{j\le m}^{2}}\lesssim 2^{-C}\|f_{n}\|_{\ell^{2}(\Z^{2})}.
\]
Choose $C \in \N$ so that
\[
\|f_{n+1}\|_{\ell^{2}(\Z^{2})}\le\frac{1}{2}\|f_{n}\|_{\ell^{2}(\Z^{2})},
\]
which implies
\begin{equation}
\|f_{n}\|_{\ell^{2}(\Z^{2})}\le\frac{1}{2}\|f_{n-1}\|_{\ell^{2}(\Z^{2})}\leq \ldots\le2^{-n} \|f\|_{\ell^{2}(\Z^{2})}.\label{eq:f_n _2 < 2^-n f_2}
\end{equation}

Let $S_{j}:=S_{j}^{0} \setminus E_{j}$. By the definition of $E_{j}$,
the function
\[
g_{n}:=\sum_{j=0}^{m}\lambda_{j}2^{-j/2}1_{S_{j}}
\]
satisfies the conditions for Proposition \ref{Prop:PropositionForStrichartz}.
Since $f_n(\xi)\leq f_n(\xi_{2^{j}})=\lambda_{j}2^{-j/2}$ holds for $\xi\in S_{j}\subset S_{j}^{0}=\left\{ \xi_{2^{j}},\ldots,\xi_{2^{j+1}-1}\right\} $ and $f_{n+1} := f_n|_{E}$,
we have
\begin{equation*}
h_{n}:=f_{n}-f_{n+1}=\sum_{j=0}^{m}f_{n}1_{S_{j}}\leq\sum_{j=0}^{m}\lambda_{j}2^{-j/2}1_{S_{j}}=g_{n}.\label{eq:h_n < g_n}
\end{equation*}

Now we use the $(1-\epsilon, 1/2)$-irregularity (see Definition \ref{Defn:RhoGammaIrregular}) of $W_t$ to deduce Strichartz estimates for $h_n$ for each $n \geq 0$. Then we will use those to prove Strichartz estimates  for $f_n$. Recalling Definition \ref{Defn:QTau},
\begin{align} 
&\int_{0}^{T_{0}}\int_{\T^{2}}\bigg{|}e^{iW_t\Delta}(\F^{-1}h_{n})(x)\bigg{|}^{4}\dd x \dd t \label{Intermediary28} \\
& = 
\int_{0}^{T_{0}}\F(|e^{iW_t\Delta}\F^{-1}h_{n}|^{4})(0)\dd t \notag \\
& = 
  \sum_{Q\in \Q}h_{n}(Q)\cdot\int_{0}^{T_{0}} e^{-iW_t\tau_{Q}}\dd t  \notag\\
& =
  \sum_{Q \in \mathcal{Q}^0}h_n(Q)T_0 + \sum_{\tau \neq 0}\sum_{Q \in \mathcal{Q}^{\tau}}h_n(Q) \int_0^{T_0} e^{-iW_t \tau} \dd t \notag\\
&\lesssim 
\sum_{Q \in \mathcal{Q}^0}h_n(Q)T_0 + \sum_{\tau > 0}\frac{\sqrt{T_0}}{\tau^{1-\epsilon}} \sum_{Q \in \mathcal{Q}^{\tau}}h_n(Q).\label{Intermediary23}
\end{align}
Note that we used the nonnegativity of $h_n$ in \eqref{Intermediary23}. Recall from Remark \ref{Rem:TauInvarianceUnderTranslation} that the set $\{\tau_{Q}: Q \subset S\} $ is invariant under translations. Since $S$ is a square with side length $2N$ we may assume without loss of generality that $S = \{(\xi_1, \xi_2): |\xi_1|,|\xi_2| \leq N\}$. For $Q = (k_1, k_2, k_3, k_4)$ we have $\tau_Q = |k_1|^2-|k_2|^2+|k_3|^2-|k_4|^2$ and therefore $\sup\{|\tau_Q|:Q \subset S\} \lesssim N^2$. If we sum up to $\tau = N^2$ in \eqref{Intermediary23}  and split the sum in $\tau$ into dyadic blocks we obtain
\begin{align*}
&\sum_{Q \in \mathcal{Q}^0}h_n(Q)T_0 + \sum_{\tau > 0}\frac{\sqrt{T_0}}{\tau^{1-\epsilon}} \sum_{Q \in \mathcal{Q}^{\tau}}h_n(Q) \\
&\lesssim
\sum_{Q \in \mathcal{Q}^0}h_n(Q)T_0 + \sum_{\ell =1}^{\log(N^2)} \frac{\sqrt{T_0}}{2^{\ell(1-\epsilon)}} \sum_{\tau=2^{\ell-1}}^{2^\ell}  \sum_{Q \in \mathcal{Q}^{\tau}}h_n(Q) \\
&\lesssim
\sum_{Q \in \mathcal{Q}^0}g_n(Q)T_0 + \sum_{\ell =1}^{\log(N^2)} \frac{\sqrt{T_0}}{2^{\ell(1-\epsilon)}} \sum_{\tau=2^{\ell-1}}^{2^{\ell}}  \sum_{Q \in \mathcal{Q}^{\tau}}g_n(Q). 
\end{align*}
Here, and throughout this proof, we use the general convention that if $\gamma \in [0, \infty)$ is not an integer then an expression such as $\sum_{\ell=1}^{\gamma}a_{\ell}$ is shorthand for $\sum_{\ell=1}^{\lceil \gamma \rceil }a_{\ell}$ and $\sum_{\ell=\gamma}^N a_{\ell}$ is short for $\sum_{\ell=\lfloor \gamma \rfloor}^N a_{\ell}$. Using Proposition \ref{Prop:PropositionForStrichartz} we bound this, up to a constant factor, by
\begin{align}
T_0\log(N)\|\lambda_j\|_{\ell^2}^4 + \sum_{\ell=1}^{\log(N^2)} \frac{\sqrt{T_0}}{2^{\ell(1-\epsilon)}}2^{\ell-1}\|\lambda_j\|_{\ell^2}^4 \nonumber 
&\lesssim 
T_0\log(N)\|\lambda_j\|_{\ell^2}^4 + \sum_{\ell=1}^{\log(N^2)} 2^{\ell\epsilon}\sqrt{T_0}\|\lambda_j\|_{\ell^2}^4 \nonumber \\
&\lesssim 
T_0\log(N)\|\lambda_j\|_{\ell^2}^4 + \frac{N^{2\epsilon}-2^{\epsilon}}{2^{\epsilon}-1}\sqrt{T_0}\|\lambda_j\|_{\ell^2}^4 \nonumber \\
&\lesssim_{\epsilon} 
(T_0\log(N) + N^{2\epsilon}\sqrt{T_0})\|\lambda_j\|_{\ell^2}^4. \label{Intermediary29}
\end{align}
Equations \eqref{Intermediary24}, \eqref{Intermediary13}, and the definition of $g_n$ imply that
\begin{align}\label{eq:h_n stri}
\|e^{iW_t\Delta}\F^{-1}h_{n}\|_{L^{4}( [0,T_{0}]\times\T^{2})}
&=(\int_{0}^{T_{0}}\int_{\T^{2}}\left|e^{iW_t\Delta}\F^{-1}h_{n}\right|^{4}\, \dd x \dd t)^{1/4}\\
&\lesssim
(T_0\log(N) + N^{2\epsilon}\sqrt{T_0})^{1/4}\|\lambda_{j}\|_{\ell_{j\leq m}^{2}} \nonumber\\
&\lesssim
(T_0\log(N) + N^{2\epsilon}\sqrt{T_0})^{1/4}\|f_{n}\|_{\ell^{2}(\Z^{2})}.\nonumber
\end{align}
Writing $f=\sum_{n=0}^{\infty}(f_{n}-f_{n+1})=\sum_{n=0}^{\infty}h_{n}$,
by \eqref{eq:h_n stri} and \eqref{eq:f_n _2 < 2^-n f_2}, we have
\begin{align*}
\|e^{iW_t\Delta}\F^{-1}f\|_{L_{t,x}^{4}\left([0,T_{0}]\times\T^{2}\right)} & \leq\sum_{n=0}^{\infty}\|e^{iW_t\Delta}\F^{-1}h_{n}\|_{L_{t,x}^{4}([0,T_{0}]\times\T^{2})}\\
 & \lesssim 
 (T_0\log(N) + N^{2\epsilon}\sqrt{T_0})^{1/4}\sum_{n=0}^{\infty}\|f_{n}\|_{\ell^{2}(\Z^{2})}\\
 &\lesssim
 (T_0\log(N) + N^{2\epsilon}\sqrt{T_0})^{1/4} \sum_{n=0}^{\infty}2^{-n} \|f\|_{\ell^{2}(\Z^{2})}\\
 &\lesssim 
 (T_0\log(N) + N^{2\epsilon}\sqrt{T_0})^{1/4} \|f\|_{\ell^{2}(\Z^{2})}.
\end{align*}
This is bounded by $(2\sqrt{T_0}N^{2\epsilon})^{1/4}\|f\|_{\ell^2(\Z^2)} \lesssim (\sqrt{T_0}N^{2\epsilon})^{1/4}\|f\|_{\ell^2(\Z^2)}$.

This completes the proof of pathwise Strichartz estimates. Now we address the stochastic Strichartz estimates. Assume that $W_t = B^H_t$ is a fractional Brownian motion on a probability space $(\Omega, \FF, P)$ and $f:\Omega \times \Z^2 \rightarrow \C$ is a given function. We can repeat everything appearing in the proof of pathwise Strichartz estimates, i.e. the reduction to $f$ nonnegative, the sequences of functions $\{f_n\}, \{g_n\}, \{h_n\}$, the sets $S_j$, $E_j$, etc. The only difference is that now all of these objects are dependent on the random parameter $\omega \in \Omega$. We repeat the estimate of $h_n$ from \eqref{Intermediary28} except now we must estimate in expectation.
\begin{align*}
\mathbb{E}\Bigg{[}\int_{0}^{T_{0}}\int_{\T^{2}}\bigg{|}e^{iW_t\Delta}\F^{-1}h_{n}\bigg{|}^{4} \dd x \dd t \Bigg{]} 
 & = \E\Bigg{[}\int_{0}^{T_{0}}\F(|e^{iB^H_t\Delta}\F^{-1}h_{n}|^{4})(0)\dd t \Bigg{]} \\
  & =
  \E\Bigg{[}\sum_{Q\in \Q}h_{n}(Q)\cdot\int_{0}^{T_{0}} e^{-iW_t\tau_{Q}}\dd t \Bigg{]}  \\
  & =
  \E\Bigg{[}\sum_{Q \in \mathcal{Q}^0}h_n(Q)T_0 + \sum_{\tau \neq 0}\sum_{Q \in \mathcal{Q}^{\tau}}h_n(Q) \int_0^{T_0} e^{-iB^H_t \tau} \dd t\Bigg{]}.  
\end{align*}
Because $h_n$ is a measurable function of $f$, like $f$, $h_n$ will be independent of the increments of $B^H_t$. Using the fact that $\E[e^{iB^H_t \tau}] = e^{-\frac{\tau^2}{2}t^{2H}}$ 
\begin{align}
&  \E\Bigg{[}\sum_{Q \in \mathcal{Q}^0}h_n(Q)T_0 + \sum_{\tau \neq  0}\sum_{Q \in \mathcal{Q}^{\tau}}h_n(Q) \int_0^{T_0} e^{-iB^H_t \tau} \dd t \Bigg{]}\notag\\
&=
 \E\Bigg{[}\sum_{Q \in \mathcal{Q}^0}h_n(Q)T_0 \Bigg{]}+ \E\Bigg{[}\sum_{\tau \neq 0}\sum_{Q \in \mathcal{Q}^{\tau}}h_n(Q)\Bigg{]} \int_0^{T_0} \E \big{[}e^{-iB^H_t \tau}\big{]} \dd t \notag\\
 &=
\E\Bigg{[}\sum_{Q \in Q^0} h_n(Q)T_0 \Bigg{]}+ \E\Bigg{[}\sum_{\ell=1}^{\log(N^2)} \sum_{\tau = 2^{\ell-1}}^{2^{\ell}} \sum_{Q \in Q^{\tau}}h_n(Q)\int_0^{T_0}e^{-\frac{\tau^2}{2}t^{2H}} \dd t \Bigg{]}. \label{Intermediary25}
\end{align}
We bound the integral in this expression by approximating $e^{ -\frac{\tau^2}{2} t^{2H}}$ as constant on the intervals in time where $j-1 \leq \frac{\tau^2}{2} t^{2H} \leq j$. Using the mean value theorem we obtain that
\begin{align*}
 \int_0^{T_0}e^{-\frac{\tau^2}{2}t^{2H}} \dd t  
&\lesssim 
\sum_{j=1}^{\frac{\tau^2}{2} T_0^{2H}} e^{-j-1}\bigg{[}\big{(}\frac{2j}{\tau^2}\big{)}^{\frac{1}{2H}} - \big{(}\frac{2(j-1)}{\tau^2}\big{)}^{\frac{1}{2H}}  \bigg{]} \\
&\lesssim
\sum_{j=1}^{\frac{\tau^2}{2} T_0^{2H}} e^{-j} \frac{1}{\tau^2 H}\big{(} \frac{2j}{\tau^2}\big{)}^{\frac{1}{2H}-1} 
\notag \\
&\lesssim
\frac{1}{\tau^{1/H}}\sum_{j=1}^{\frac{\tau^2}{2}T_0^{2H}}\frac{e^{-j}(2j)^{\frac{1}{2H}-1}}{H} 
\\
&\lesssim_H
\frac{1}{\tau^{1/H}}.
\end{align*}
Combining this with the trivial bound $\int_0^{T_0} e^{-\frac{\tau^2}{2}t^{2H}} \dd t \leq T_0$ we obtain that
\begin{equation*}
\int_0^{T_0} e^{-\frac{\tau^2}{2}t^{2H}} \dd t \lesssim_{H} \min(\frac{1}{\tau^{1/H}}, T_0).
\end{equation*}
Plugging this into \eqref{Intermediary25}, we bound \eqref{Intermediary25} by
\begin{align*}
&\E\Bigg{[}\sum_{Q \in Q^0} h_n(Q)T_0\Bigg{]}  + \E\Bigg{[}\sum_{\ell=1}^{\log(N^2)} \sum_{\tau = 2^{\ell-1}}^{2^{\ell}} \sum_{Q \in Q^{\tau}}h_n(Q)\min(\frac{1}{\tau^{1/H}}, T_0) \Bigg{]}
\notag 
\end{align*}
Using Proposition \ref{Prop:PropositionForStrichartz} we bound this by 
\begin{equation}
\E[\log(N)T_0\|\lambda_j\|_{\ell^2}^4] + \E[\|\lambda_j\|_{\ell^2}^4]\sum_{\ell=1}^{\log(N^2)} \min(\frac{1}{2^{\ell(\frac{1}{H}-1)}}, 2^{\ell}T_0) \label{Intermediary27}
\end{equation}
If $N^2 \geq T_0^{-H} $ then 
\begin{align*}
&\sum_{\ell=1}^{\log(N^2)} \min(\frac{1}{2^{\ell(\frac{1}{H}-1)}}, 2^{\ell}T_0) 
\\
&\lesssim
\sum_{\ell=1}^{\log(T_0^{-H})} 2^{\ell}T_0 + \sum_{\ell = \log(T_0^{-H})}^{\log(N^2)} \frac{1}{2^{\ell(\frac{1}{H}-1)}}
\\
&\lesssim
T_0^{1-H} + \frac{N^{2(1-\frac{1}{H})-T_0^{1-H}}}{2^{\frac{H-1}{H}}-1} \\
&\lesssim
T_0^{1-H}.
\end{align*} 
If $N^2 \leq T_0^{-H}$ then the term where $\frac{1}{2^{{\ell}(\frac{1}{H}-1)}} < 2^{\ell}T_0 $ vanishes. Substituting this into \eqref{Intermediary27} we bound \eqref{Intermediary27} by
\begin{equation*}
(\log(N)T_0 + T_0^{1-H})\E[\|\lambda_j\|_{\ell^2}^4].
\end{equation*}
The rest of the analysis is the same as that appearing in the proof of pathwise Strichartz estimates starting from \eqref{Intermediary29}.

\end{proof}

Recall from Theorem \ref{Thm:BrownianIsRhoIrregular} that a standard Brownian motion, and more generally a fractional Brownian motion with Hurst index $H \leq 1/2$, satisfies the hypotheses of Theorem \ref{Thm:PathwiseStrichartz}. We immediately obtain the following corollary.

\begin{cor}
If $B^H_t$ is a fractional Brownian motion with Hurst index $H \leq 1/2$ then for all rectangles $S = \{\xi: |\xi_1-a|,|\xi_2-b| \leq N\}$,  almost surely the paths $t \mapsto B^H_t(\omega)$ satisfy the strichartz estimate
\begin{equation*}
\|e^{iB^H_t(\omega) \Delta}P_Sf\|_{L^4(I \times \T^2)} \lesssim_{\epsilon, \omega}(N^{2\epsilon}\sqrt{|I|})^{1/4} \|f\|_{L^2(\T^2)}.
\end{equation*}
The implicit constant in this inequality will depend on the choice of path $t \mapsto B^H_t(\omega)$ for the fractional Brownian motion.
\end{cor}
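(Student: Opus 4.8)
The plan is to deduce this directly from part~1 of Theorem~\ref{Thm:PathwiseStrichartz} together with Theorem~\ref{Thm:BrownianIsRhoIrregular}; the only real work is bookkeeping with the almost-sure event and the irregularity exponents. First I would recall that by Theorem~\ref{Thm:BrownianIsRhoIrregular}, for a fractional Brownian motion $B^H$ with $H \leq 1/2$ and any $\rho < 1/(2H)$, there is a $\gamma > 1/2$ and a full-measure event on which the sample path is $(\rho, \gamma)$-irregular. Since $H \leq 1/2$ forces $1/(2H) \geq 1$, every $\rho < 1$ is admissible here.

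The one subtlety is that the desired estimate should hold on a single full-measure event that is good for all $\epsilon > 0$ at once. To arrange this I would fix a sequence $\rho_n \uparrow 1$ with $\rho_n < 1$, and for each $n$ let $\Omega_n$ be the full-measure event on which $t \mapsto B^H_t(\omega)$ is $(\rho_n, \gamma_n)$-irregular for some $\gamma_n > 1/2$. Set $\Omega_0 := \bigcap_n \Omega_n$, which still has probability one. On $\Omega_0$ the path is $(\rho_n, \gamma_n)$-irregular for every $n$, and since $\gamma_n > 1/2$, restricting to the bounded interval $[0,1]$ upgrades this to $(\rho_n, 1/2)$-irregularity on $[0,1]$ (as noted after Theorem~\ref{Thm:BrownianIsRhoIrregular}: dividing $|s-t|^{\gamma_n}$ by $|s-t|^{1/2}$ costs only a bounded factor $|s-t|^{\gamma_n - 1/2} \leq 1$). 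Moreover $(\rho, 1/2)$-irregularity is monotone in $\rho$ — if the defining supremum is finite for $\rho$ it is finite for every smaller exponent, since $(1+|\tau|)^{\rho'} \leq (1+|\tau|)^{\rho}$ — so given any $\epsilon > 0$ I can choose $n$ with $\rho_n > 1 - \epsilon$ and conclude that every $\omega \in \Omega_0$ yields a path that is $(1-\epsilon, 1/2)$-irregular on $[0,1]$.

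Having produced, for each $\omega \in \Omega_0$ and each $\epsilon > 0$, a $(1-\epsilon, 1/2)$-irregular modulation $t \mapsto B^H_t(\omega)$, I would simply apply part~1 of Theorem~\ref{Thm:PathwiseStrichartz} with $W_t = B^H_t(\omega)$ and $I \subset [0,1]$ of length $|I|$. This gives exactly
\[
\|e^{iB^H_t(\omega)\Delta}P_S f\|_{L^4(I \times \T^2)} \lesssim_{W_t, \epsilon} (N^{2\epsilon}\sqrt{|I|})^{1/4}\|f\|_{L^2(\T^2)},
\]
and since the implicit constant in the pathwise estimate depends on the modulation — here the fixed sample path selected by $\omega$ — it is legitimately recorded as $\lesssim_{\epsilon, \omega}$. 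I do not anticipate a genuine obstacle: the heavy lifting is entirely in Theorems~\ref{Thm:PathwiseStrichartz} and~\ref{Thm:BrownianIsRhoIrregular}, and the main (minor) point to get right is the countable-intersection argument ensuring one probability-one event serves all $\epsilon$, together with the monotonicity and bounded-interval reductions of the irregularity condition.
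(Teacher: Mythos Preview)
Your proposal is correct and follows the same approach as the paper, which treats the corollary as an immediate consequence of Theorem~\ref{Thm:BrownianIsRhoIrregular} and part~1 of Theorem~\ref{Thm:PathwiseStrichartz} without further argument. Your added bookkeeping --- the countable intersection over $\rho_n \uparrow 1$ to secure a single full-measure event good for all $\epsilon$, and the explicit monotonicity and bounded-interval reductions --- is exactly the content that the paper's one-line justification leaves implicit.
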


\section{Function Spaces}\label{Sec:FunctionSpaces}

In this section we discuss the function spaces that we use to apply a Picard iteration and obtain a solution to $\eqref{Eq:ModulatedNLSDuhamel}$. As far as the author is aware these spaces were first introduced by Wiener in \cite{ctx23497407350006761NWienerVarDefn} and first used in the context of dispersive PDE in \cite{KochTataru2005IntroductionOfVariaSpaces}. Our notation is based on \cite{IncidenceGeometryStrichartz, UpVpEstimates, XsYsEstimates}. Let $X$ be any Banach space. In this paper $X$ will usually be the Sobolev space $H^s(\T^2)$. The $p$-variation spaces that we will discuss are built on the notion of a partition. In the following let $a,b \in [-\infty, \infty]$ and $a < b$. In this paper we will mostly consider the case $a=0$ and $b=T>0$.

\begin{df}
 Define $\mathcal{Z}$ to be the set of all finite sequences of increasing times $a = t_0 < \cdots < t_k = b $. Elements of $\mathcal{Z}$ are called partitions.
\end{df}  
\begin{df}\label{Updefn}
Let $1 < p < \infty$. We say that $v \in L^{\infty}(\R, X)$ is a $U^p(X)$ atom if there exists a partition $\pi = \{t_0 < \cdots < t_k\} \in \mathcal{Z}$ and $\phi_j \in X$ satisfying $(\sum_{j=1}^k \|\phi_j\|_X^p)^{1/p}=1$ such that
\begin{equation*}
v = \sum_{j=1}^k 1_{[t_{j-1}, t_j)}\phi_j.
\end{equation*}
We define $U^p(X)$ to be the set of all $v \in L^{\infty}(\R, X)$ such that
\begin{align*}
&\|v\|_{U^p(X)}
:=
\inf\left\{\sum_{j=1}^{\infty}|\lambda_j|: v = \sum_{j=1}^{\infty} \lambda_jv_j \text{ and for all j, }v_j \text{ is a }U^p(X) \text{ atom} \right\} < \infty,
\end{align*} 
\end{df}
\begin{df}\label{Vpdefn}
The space $V^p(X)$ consists of the set of all functions $v: [a,b] \rightarrow X$ such that the norm 
\begin{equation*}
\|v\|_{V^p(X)}:= \sup_{\{t_j\}_{j=0}^M \in \mathcal{Z}} \left(\sum_{j=0}^{M} \|v(t_j)-v(t_{j-1})\|_{X}^p\right)^{1/p}
\end{equation*}
is finite. Note that we use the convention $v(t_{-1})=0$. This convention is necessary because otherwise all nonzero constant functions $v(t)=v \in X$ would have norm $0$ in the space $V^p(X)$. One should be careful since many authors make slightly different conventions in order to address this. See \cite[Section 2]{XsYsEstimates}, \cite[Section 2]{UpVpEstimates}, and \cite[Section 4.1]{UpVptextbook}.
\end{df}
\begin{rem}
Both the $U^p$ and $V^p$ spaces implicitly depend on the interval that they are defined on. Throughout the paper, when the Banach space $X$ is obvious, we may use the notation $U^p([0,T])$ or $V^p([0,T])$ to emphasize this interval. In other cases where neither the Banach space or the interval needs to be emphasized we may just write $V^p$ or $U^p$.
\end{rem}
\begin{rem}
For each $1 \leq p < \infty $ the spaces $U^p(X)$ and $V^p(X)$ are Banach spaces. If $p > 1$  the dual of $U^p(X)$ is $V^{p'}(X^*) $ where $\frac{1}{p}+\frac{1}{p'}=1$ and $X^*$ is the dual of $X$. For a discussion of the general theory of these spaces see \cite{UpVptextbook,UpVpEstimates,XsYsEstimates}.
\end{rem}
The contraction argument that is used to construct solutions to \eqref{Eq:ModulatedNLSDuhamel} will utilize the following types of spaces:
\begin{df}\label{Defn:YSpace} Let $T > 0$, $f(t, x): [0,T] \times \T^2 \rightarrow \mathbb{C}$ a function and $B$ a Banach space of functions on the interval $[0,T]$. If $W_t:[0,T]\rightarrow \R$ is a function we can define the space $Y^s(B)$, depending on $W_t$, as the space of functions such that
\begin{equation*}
\|f\|_{Y^s(B)} := \left(\sum_{k \in \Z^2} \langle k \rangle^{2s}\|e^{iW_t|k|^2}f_k(t)\|_B^2\right)^{1/2} < \infty,
\end{equation*}
In this paper we will consider the cases where $B= V^p([0,T])$ or $B=U^p([0,T])$ for $p > 1$.
\end{df}

In Section \ref{Sec:Trilinear} we discuss Strichartz type estimates that are adapted to these spaces. In an intermediary step of the proof of Theorem \ref{Thm:LocalWellPosedness} we consider the following norms
\begin{df}\label{Defn:U^p_WSpace}
\begin{equation*}
\|u\|_{U^p_{W}(H^s_x)} := \|e^{-iW_t\Delta}u\|_{U^p(H^s_x)},
\end{equation*}
\end{df}
and
\begin{df}\label{Defn:V^p_WSpace}
\begin{equation*}
\|u\|_{V^p_{W}(H^s_x)} := \|e^{-iW_t\Delta}u\|_{V^p(H^s_x)}.
\end{equation*}    
\end{df}
The following proposition relates these norms to the $Y^s(U^p)$ and $Y^s(V^p)$ spaces.

\begin{prop}\label{Prop:FunctionEmbeddings}
If $p \geq 2$ and $W_t$ is a given function then the following are continuous embeddings
\begin{equation*}
 Y^s(U^p) \hookrightarrow Y^s(V^p) \hookrightarrow V^p_{W}(H^s_x). 
\end{equation*}
\end{prop}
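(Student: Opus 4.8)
The plan is to handle the two embeddings separately, reducing each to a statement about the scalar functions $t \mapsto e^{iW_t|k|^2}f_k(t)$, one frequency at a time. The common starting point is that $e^{-iW_t\Delta}$ acts diagonally in Fourier space: since $\Delta e^{ikx} = -|k|^2 e^{ikx}$, the $k$-th Fourier coefficient of $e^{-iW_t\Delta}f$ is exactly $e^{iW_t|k|^2}f_k(t)$. Writing $g_k(t) := e^{iW_t|k|^2}f_k(t)$, the definition of $Y^s(B)$ becomes $\|f\|_{Y^s(B)}^2 = \sum_k \langle k\rangle^{2s}\|g_k\|_B^2$, while the definition of $V^p_W(H^s_x)$ becomes $\|f\|_{V^p_W(H^s_x)} = \|g\|_{V^p(H^s_x)}$, where $g(t)$ is the function with Fourier coefficients $g_k(t)$. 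Both embeddings are thus comparisons between ``take the one–variable norm of each $g_k$, then weighted $\ell^2$ in $k$'' and ``take the $H^s$ norm in $k$ first, then the variation in $t$''.

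For the first embedding $Y^s(U^p) \hookrightarrow Y^s(V^p)$, I would invoke the standard continuous embedding $U^p(X) \hookrightarrow V^p(X)$ of the general variation-space theory (see \cite{UpVpEstimates, XsYsEstimates} and the references in Section~\ref{Sec:FunctionSpaces}), applied with $X = \C$ to each scalar function $g_k$. This gives $\|g_k\|_{V^p} \lesssim \|g_k\|_{U^p}$ with constant independent of $k$; squaring, multiplying by $\langle k\rangle^{2s}$, and summing over $k \in \Z^2$ yields $\|f\|_{Y^s(V^p)} \lesssim \|f\|_{Y^s(U^p)}$ immediately.

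The second embedding $Y^s(V^p) \hookrightarrow V^p_W(H^s_x)$ is the substantive one, and it amounts to interchanging the weighted $\ell^2$ structure of $H^s$ with the $\ell^p$ structure implicit in the $V^p$ variation. I would fix an arbitrary partition $\{t_j\}_{j=0}^M \in \mathcal{Z}$ (with the convention $g(t_{-1})=0$), set $a_{jk} := \langle k\rangle^s\big(g_k(t_j)-g_k(t_{j-1})\big)$, and expand the $H^s$ norm frequency-wise to write $\sum_j \|g(t_j)-g(t_{j-1})\|_{H^s}^p = \sum_j \big(\sum_k |a_{jk}|^2\big)^{p/2}$. The crucial step is Minkowski's inequality for mixed $\ell^p_j\ell^2_k$ norms, which in the form $\big\|\,\|a_{jk}\|_{\ell^2_k}\big\|_{\ell^p_j} \le \big\|\,\|a_{jk}\|_{\ell^p_j}\big\|_{\ell^2_k}$ holds precisely because $p \ge 2$; this pulls the $\ell^2_k$ sum outside the $\ell^p_j$ partition sum. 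Bounding each inner partition sum $\big(\sum_j |g_k(t_j)-g_k(t_{j-1})|^p\big)^{1/p}$ by the full supremum $\|g_k\|_{V^p}$ (the weight $\langle k\rangle^s$ being carried along in $a_{jk}$) then dominates everything by $\big(\sum_k \langle k\rangle^{2s}\|g_k\|_{V^p}^2\big)^{1/2} = \|f\|_{Y^s(V^p)}$, uniformly in the chosen partition. Taking the supremum over partitions gives $\|g\|_{V^p(H^s_x)} \le \|f\|_{Y^s(V^p)}$, which is the claim.

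The one point requiring care — and the reason for the hypothesis $p\ge 2$ — is the direction of the Minkowski interchange: it is exactly the condition $2 \le p$ that makes $\ell^p_j\ell^2_k \hookrightarrow \ell^2_k\ell^p_j$ hold rather than its reverse, so for $p<2$ this argument fails. I expect no further obstacles: the compatibility of the $v(t_{-1})=0$ conventions between the scalar $V^p$ and the $H^s$-valued $V^p$ is automatic, since the $j=0$ boundary term $\|g(t_0)\|_{H^s}^2 = \sum_k\langle k\rangle^{2s}|g_k(t_0)|^2$ matches term-by-term under the interchange, and the first embedding is a routine corollary of the general $U^p$–$V^p$ theory.
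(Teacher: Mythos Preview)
Your proposal is correct and follows essentially the same route as the paper: both embeddings are handled exactly as you describe, with the first reduced to the standard scalar embedding $U^p\hookrightarrow V^p$ applied frequency-by-frequency, and the second obtained by fixing a partition, expanding the $H^s$ norm as a weighted $\ell^2_k$ sum, and invoking Minkowski's inequality (using $p\ge 2$) to swap the $\ell^p_j$ partition sum outside the $\ell^2_k$ sum before bounding by the supremum over partitions. The only cosmetic difference is that the paper treats the second embedding first.
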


\begin{proof}
First we show that $\|u\|_{V^p_{W}(H^s_x)} \lesssim \|u\|_{Y^s(V^p)}$. Using that $p \geq 2$ we apply Minkowski's inequality:
\begin{align*}
& \|u\|_{V^p_{W}(H^s_x)} \\
&=
\sup_{\pi \in \mathcal{Z}} \bigg{(} \sum_{t_j \in \pi} \|e^{-iW_{t_j}\Delta }u(t_j) - e^{-iW_{t_{j-1}}\Delta }u(t_{j-1})\|_{H^s_x}^p \bigg{)}^{\frac{1}{p}} \\
&=
\sup_{\pi} \Bigg{[}\sum_{t_j \in \pi} \bigg{(} \sum_{k \in \Z^2}\langle k \rangle^{2s}|e^{iW_{t_j}|k|^2} u_k(t_j) - e^{iW_{t_{j-1}}|k|^2} u_k(t_{j-1})|^2 \bigg{)}^{\frac{p}{2}} \Bigg{]}^{\frac{2}{p}\cdot\frac{1}{2}} \\
&\leq
\sup_{\pi} \bigg{[} \sum_{k \in \Z^2}\langle k \rangle^{2s}\left(\sum_{t_j \in \pi}|e^{iW_{t_j}|k|^2} u_k(t_j) - e^{iW_{t_{j-1}}|k|^2} u_k(t_{j-1})|^p\right)^{\frac{2}{p}} \bigg{]}^{\frac{1}{2}} \\
&\leq
\|u\|_{Y^s(V^p)}.
\end{align*}

The  embedding $Y^s(U^p) \hookrightarrow Y^s(V^p)$ immediately follows from the fact that $U^p(X) \hookrightarrow V^p(X)$ is a continuous embedding for any Banach space $X$. This can be shown by checking that if $u$ is a $U^p(X)$ atom then $\|u\|_{V^p} \lesssim 1$. For further reference see \cite{UpVptextbook}.

\end{proof}

The following result lists some basic properties of the $Y^s$ spaces that will be necessary for the contraction argument we perform in Section \ref{Sec:Local}. The proofs are similar to those appearing in \cite[Section 2]{XsYsEstimates}. Because our $Y^s$ spaces are based on the Fourier multiplier $e^{iW_t|k|^2}$ as opposed to $e^{it|k|^2}$ there are some slight changes that need to be made so we reprove the results here.
\begin{prop} The $Y^s$ norms have the following properties.

If $A$ and $B$ are disjoint subsets of $\Z^2$ then  
\begin{equation}\label{YsSquaring Estimate} 
\|P_Af\|_{Y^s(V^p)}^2 + \|P_Bf\|_{Y^s(V^p)}^2 = \|P_{A \sqcup B}f\|_{Y^s(V^p)}^2.
\end{equation}

If $f \in Y^s(V^p)$ where $p \geq 2$ and $\mathcal{I}(f) = \int_0^t e^{i(W_t-W_{t'})\Delta}f(t') \dd t'$ then 
\begin{equation}\label{YsInhomogeneousEstimate} 
\| 1_{[0,T]} \mathcal{I}(f) \|_{Y^s(V^p)} \lesssim \bigg{|} \sup_{\|v\|_{Y^{-s}(V^{p'})}=1} \int_0^T \int_{\T^2}  f\overline{v} \dd x \dd t \bigg{|}.
\end{equation}

If $f \in H^s$ then
\begin{equation}\label{YsHsEstimate}\|1_{[0,T]} e^{i W_t \Delta} f \|_{Y^s(V^p)} \approx \|f\|_{H^s}.
\end{equation}

and for functions $f \in Y^s(V^p)$
\begin{equation}\label{YsLInfinityLowerBoundEstimate}
\|1_{[0,T]}f\|_{Y^s(V^p)} \gtrsim \|f\|_{L^{\infty}([0,T],\, H^s)}.
\end{equation}
\end{prop}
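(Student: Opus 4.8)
The plan is to exploit the fact that the $Y^s(V^p)$ norm is a weighted $\ell^2$ sum over frequencies $k\in\Z^2$ of the $V^p$ norms of the single modulated coefficients $e^{iW_t|k|^2}f_k(t)$, so that each of the four claims decouples into a one-frequency statement plus an $\ell^2_k$ bookkeeping step. Property \eqref{YsSquaring Estimate} is then immediate: since $(P_Af)_k=f_k$ for $k\in A$ and $0$ otherwise, and likewise for $B$, the defining sum for $\|P_{A\sqcup B}f\|_{Y^s(V^p)}^2$ splits as $\sum_{k\in A}+\sum_{k\in B}$ precisely because $A$ and $B$ are disjoint. The one computational fact I would use repeatedly is that, writing $F_k(t):=e^{iW_t|k|^2}f_k(t)$, the $k$-th Fourier coefficient of $\mathcal{I}(f)$ satisfies $e^{iW_t|k|^2}[\mathcal{I}(f)]_k(t)=\int_0^t F_k(t')\dd t'$; that is, conjugating by the modulated propagator turns the Duhamel operator into plain integration in time, and turns $e^{iW_t\Delta}f$ into the constant-in-$t$ coefficient $f_k$.

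For \eqref{YsHsEstimate} and \eqref{YsLInfinityLowerBoundEstimate} I would first record two elementary facts about the scalar $V^p$ norm. First, for a constant $c$ one has $\|1_{[0,T]}c\|_{V^p}\approx|c|$ (the convention $v(t_{-1})=0$ forces one jump of size $|c|$, with at most one more when the cutoff returns to $0$); applying this after the exponentials cancel gives $\|1_{[0,T]}e^{iW_t|k|^2}(e^{iW_t\Delta}f)_k\|_{V^p}=\|1_{[0,T]}f_k\|_{V^p}\approx|f_k|$, and summing against $\langle k\rangle^{2s}$ yields \eqref{YsHsEstimate}. Second, $\|v\|_{L^\infty}\le\|v\|_{V^p}$, since for fixed $t^\ast$ the one-point partition gives $|v(t^\ast)|=|v(t^\ast)-v(t_{-1})|\le\|v\|_{V^p}$; because $|e^{iW_t|k|^2}f_k(t)|=|f_k(t)|$ this bounds $\sup_t|f_k(t)|$ by $\|e^{iW_t|k|^2}f_k\|_{V^p}$, and then $\sup_t\sum_k\langle k\rangle^{2s}|f_k(t)|^2\le\sum_k\langle k\rangle^{2s}\sup_t|f_k(t)|^2$ gives \eqref{YsLInfinityLowerBoundEstimate}.

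The substance is the inhomogeneous bound \eqref{YsInhomogeneousEstimate}, which I would prove by a two-layer duality argument. At a single frequency, $\|1_{[0,T]}\int_0^t F_k(t')\dd t'\|_{V^p}$ is a supremum over time-partitions of an $\ell^p$ norm of the consecutive integrals $\int_{t_{j-1}}^{t_j}F_k$; dualizing this $\ell^p$ norm against a sequence in $\ell^{p'}$ rewrites the pairing as $\int_0^T F_k\overline{\phi_k}\,\dd t$, where $\phi_k$ is a step function of $U^{p'}$-norm at most $1$, so $\|1_{[0,T]}\int_0^t F_k\,\dd t'\|_{V^p}\lesssim\sup_{\|\phi_k\|_{U^{p'}}\le1}|\int_0^T F_k\overline{\phi_k}\,\dd t|$. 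Setting $v_k:=e^{-iW_t|k|^2}\phi_k$ makes the modulation cancel, so that $\int_0^T F_k\overline{\phi_k}\,\dd t=\int_0^T f_k\overline{v_k}\,\dd t$ and $\|e^{iW_t|k|^2}v_k\|_{U^{p'}}=\|\phi_k\|_{U^{p'}}$. I would then choose near-optimal $\phi_k$, glue the $v_k$ into a single $v$ with the $\langle k\rangle^{2s}$ weights dictated by the $\ell^2_k$-duality for $Y^s$, and normalize; this produces a test function with $\|v\|_{Y^{-s}(U^{p'})}\le1$ and $\int_0^T\int_{\T^2}f\overline{v}\,\dd x\,\dd t\gtrsim\|1_{[0,T]}\mathcal{I}(f)\|_{Y^s(V^p)}$. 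Finally the embedding $U^{p'}\hookrightarrow V^{p'}$ replaces $Y^{-s}(U^{p'})$ by the larger $Y^{-s}(V^{p'})$ on the test side, giving \eqref{YsInhomogeneousEstimate}.

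I expect the main obstacle to be exactly this last estimate: the $\ell^p$–$\ell^{p'}$ duality in time and the $\ell^2_k$–$\ell^2_k$ duality in frequency must be combined so that the weights $\langle k\rangle^{\pm s}$ and the modulation factors $e^{\pm iW_t|k|^2}$ cancel cleanly, and one must check that restricting to step functions still reaches enough of the $U^{p'}$ ball (which suffices here, as only the upper bound is needed) and that the $V^p$ right-continuity conventions are handled as in \cite{XsYsEstimates}. The remaining three properties are comparatively routine once the frequency-by-frequency reduction and the constant/$L^\infty$ facts about $V^p$ are in place.
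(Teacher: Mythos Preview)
Your proposal is correct. For \eqref{YsSquaring Estimate}, \eqref{YsHsEstimate} and \eqref{YsLInfinityLowerBoundEstimate} your argument is exactly the paper's: expand the definition, use that $\|1_{[0,T]}c\|_{V^p}\approx|c|$ for constants, and use $\|v\|_{L^\infty}\le\|v\|_{V^p}$.

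For the inhomogeneous estimate \eqref{YsInhomogeneousEstimate} your route and the paper's are closely related but arranged differently. The paper first uses the embedding $Y^s(U^p)\hookrightarrow Y^s(V^p)$ to replace the $V^p$ norm of the primitive by its $U^p$ norm, and then invokes the $U^p$--$V^{p'}$ duality pairing of \cite[Theorem~2.8, Proposition~2.10]{XsYsEstimates} to write $\|\int_0^t F_k\|_{U^p}\lesssim\sup_{\|v\|_{V^{p'}}=1}|\int_0^T F_k v\,\dd t|$, obtaining a $V^{p'}$ test function directly; the $\ell^2_k$ duality is then layered on top exactly as you do. You instead compute the $V^p$ norm of the primitive by hand: dualizing the $\ell^p$ partition sum produces a step function, i.e.\ a $U^{p'}$ atom, as the test object, and only at the end you invoke $U^{p'}\hookrightarrow V^{p'}$ to enlarge the test space to $Y^{-s}(V^{p'})$. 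In effect the two arguments use the same embedding $U^q\hookrightarrow V^q$, but the paper applies it for $q=p$ on the function side \emph{before} dualizing while you apply it for $q=p'$ on the test side \emph{after} dualizing. Your version is slightly more self-contained (it bypasses the black-box duality theorem and needs only the atomic definition of $U^{p'}$), while the paper's is shorter once one is willing to cite \cite{XsYsEstimates}.
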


\begin{proof}
The estimate \eqref{YsSquaring Estimate} follows immediately from expanding definitions. The estimate \eqref{YsHsEstimate} follows from the fact that the $V^p$ norm of a constant is that constant, and \eqref{YsLInfinityLowerBoundEstimate} follows from the fact that $\|g\|_{L^{\infty}([0,T],X)} \leq \|g\|_{V^p(X)}$ holds for any Banach space $X$ and function $g:[0,T]\rightarrow X$. Equation \eqref{YsInhomogeneousEstimate} is more difficult so we prove it in detail. Let $f \in Y^s(V^p)$. Using Proposition \eqref{Prop:FunctionEmbeddings} and duality,
\begin{align}
&\|\mathcal{I}(f)\|_{Y^s(V^p)}
\leq 
\|\mathcal{I}(f)\|_{Y^s(U^p)} \notag\\
&=
\bigg{(} \sum_{k \in \Z^2}\langle k \rangle^{2s}\|\int_0^t 1_{[0,T]}e^{iW_{t'}|k|^2}f_k(t') \|_{U^p}^2 \bigg{)}^{1/2} \notag\\
&=
\sup_{\|\{a_k\}\|_{\ell^2(\Z^2)=1}}\bigg{|}  \sum_{k \in \Z^2}a_k\langle k \rangle^{s}\|\int_0^t 1_{[0,T]}e^{iW_{t'}|k|^2}f_{k}(t') \|_{U^p}   \bigg{|}. \label{Intermediary14}
\end{align}
Because $V^{p'}$ is dual to $U^p$ we use \cite[Theorem 2.8 and Proposition 2.10] {XsYsEstimates} to deduce that $\eqref{Intermediary14}$ is bounded above by 
\begin{align}\label{Intermediary15}
\sup_{\|\{a_k\}\|_{\ell^2(\Z^2)=1}} \bigg{|}  \sum_{k \in \Z^2} a_k\langle k \rangle^{s} \sup_{\|v\|_{V^{p'}}=1} \int_0^T e^{iW_{t}|k|^2} f_k(t)v(t) \dd t \bigg{|}.
\end{align}
Let $\epsilon > 0$. For each $k$ choose $a_k \in \C$ and $v_k \in V^{p'}$ such that $\|\{a_k\}\|_{\ell^2(\Z^2)} = 1$, $\|v_k\|_{V^{p'}} = 1$, and 
\begin{align*}
&\eqref{Intermediary15} \leq \epsilon + \bigg{|}\sum_{k \in \Z^2} a_k\langle k \rangle^{s} \int_0^T e^{iW_{t}|k|^2} f_k(t)v_k(t) \dd t  \bigg{|} \\
&=
\epsilon + \bigg{|} \int_{0}^T \int_{\T^2}f\overline{v} \dd x \dd t \bigg{|}.
\end{align*}
where $v(t, x) = \sum_{k \in \Z^2}\overline{a_k}\langle k \rangle^se^{-iW_t|k|^2}\overline{v_k(t)}e^{ikx}$. The fact that $\|\{a_k\}\|_{\ell^2(\Z^2)} = \|v_k\|_{V^{p'}} = 1$ for all $k$ implies that $\|v(t, x)\|_{Y^{-s}(V^{p'})}=1$.  Since $\epsilon $ was arbitrary, \eqref{YsInhomogeneousEstimate} follows.
\end{proof}
\section{A trilinear Estimate for \texorpdfstring{$\mathcal{I}$}{Lg}}\label{Sec:Trilinear}
In the beginning of this section we show that the pathwise Strichartz estimates proven in Theorem \ref{Thm:PathwiseStrichartz} can be adapted to the space $Y^0(V^2)$. We use this to prove a trilinear estimate for the operator $\mathcal{I}$, defined in (\ref{YsInhomogeneousEstimate}). This result enables us to prove the existence of local solutions to \eqref{Eq:ModulatedNLSDuhamel} in Section \ref{Sec:Local}.

For $N \in 2^{\N}$, let $\mathcal{C}_N$ denote the set of all squares with side length $N$.
\begin{equation*}
    \CC_N := \{ (0, N]^2 +N\xi_0 \cap \Z^2: \xi_0 \in \Z^2\}.
\end{equation*}

\begin{lem}\label{Lem:YsStrichartz}
Let $N \in 2^{\N}$, $\epsilon > 0$, and $I$ be an interval such that $|I| = T_0 $. For all squares $C \in \CC_N$ and $u \in Y^0(V^2)$ we have 
\begin{equation*}
    \|P_Cu\|_{L^4(I \times \T^2)}\lesssim_{W_t, \epsilon} (N^{2\epsilon}\sqrt{T_0})^{1/4}\|u\|_{Y^0(V^2)}.
\end{equation*}
\end{lem}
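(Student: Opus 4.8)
The plan is to upgrade the pathwise Strichartz estimate of Theorem \ref{Thm:PathwiseStrichartz} for the free modulated flow $e^{iW_t\Delta}$ to functions in the adapted space $Y^0(V^2)$ by the standard transference machinery for $U^p$/$V^p$ spaces. First I would pass through the chain of embeddings
\[
Y^0(V^2) \hookrightarrow V^2_W(L^2_x) \hookrightarrow U^4_W(L^2_x),
\]
where the first inclusion is Proposition \ref{Prop:FunctionEmbeddings} with $s=0$ (so $H^0_x = L^2_x$), and the second is the standard embedding $V^2(X)\hookrightarrow U^4(X)$ (valid since $2<4$, see \cite{UpVptextbook}) transported through the isometry $u \mapsto e^{-iW_t\Delta}u$ defining the twisted spaces of Definitions \ref{Defn:U^p_WSpace} and \ref{Defn:V^p_WSpace}. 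Since $\|P_C u\|_{Y^0(V^2)} \leq \|u\|_{Y^0(V^2)}$, it therefore suffices to prove the bound
\[
\|P_C u\|_{L^4(I\times\T^2)} \lesssim (N^{2\epsilon}\sqrt{T_0})^{1/4}\|u\|_{U^4_W(L^2_x)}.
\]

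By the definition of $U^4_W(L^2_x)$ I would take an almost-optimal atomic decomposition $e^{-iW_t\Delta}u = \sum_\ell \lambda_\ell v_\ell$ with $\sum_\ell|\lambda_\ell| \lesssim \|u\|_{U^4_W(L^2_x)}$ and each $v_\ell$ a $U^4(L^2_x)$ atom. Since $P_C$ and $e^{iW_t\Delta}$ are both Fourier multipliers they commute, so applying the triangle inequality for $\|\cdot\|_{L^4(I\times\T^2)}$ reduces the estimate to a single $U^4_W(L^2_x)$ atom, i.e.\ a function $u$ that on each interval $[t_{j-1},t_j)$ of a partition equals $e^{iW_t\Delta}\phi_j$, with $\sum_j\|\phi_j\|_{L^2(\T^2)}^4 = 1$.

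For such an atom, using that the intervals $[t_{j-1},t_j)$ partition the line and bounding each truncated time integral by the full one, I would write
\[
\|P_C u\|_{L^4(I\times\T^2)}^4 = \sum_j \int_{I\cap[t_{j-1},t_j)}\int_{\T^2}\bigl|e^{iW_t\Delta}P_C\phi_j\bigr|^4\,\dd x\,\dd t \leq \sum_j \bigl\|e^{iW_t\Delta}P_C\phi_j\bigr\|_{L^4(I\times\T^2)}^4,
\]
and then apply Theorem \ref{Thm:PathwiseStrichartz}(1) to each summand, noting that a cube $C \in \CC_N$ of side length $N$ sits inside a rectangle of half-width $\lesssim N$ as permitted there. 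This yields $\sum_j \|e^{iW_t\Delta}P_C\phi_j\|_{L^4}^4 \lesssim N^{2\epsilon}\sqrt{T_0}\,\sum_j\|\phi_j\|_{L^2(\T^2)}^4 = N^{2\epsilon}\sqrt{T_0}$, and taking fourth roots gives the atomic bound.

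This lemma introduces no genuinely new analytic difficulty beyond Theorem \ref{Thm:PathwiseStrichartz}; the content is entirely in correctly invoking the functional-analytic transference. The step requiring the most care is verifying that the embedding $V^2_W(L^2_x)\hookrightarrow U^4_W(L^2_x)$ costs only a fixed multiplicative constant, \emph{independent of $N$ and $T_0$} (the embedding constant degenerates only as the $U^p$ index decreases to $2$, which does not occur here), and that this loss does not interact with the frequency cube $C$ or the time interval $I$. One should likewise confirm that reducing to a single atom is legitimate, which is routine precisely because $P_C$ commutes with the twisted evolution.
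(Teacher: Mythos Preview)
Your proposal is correct and follows essentially the same argument as the paper: reduce to $U^4_W(L^2_x)$ atoms via the chain $Y^0(V^2)\hookrightarrow V^2_W(L^2_x)\hookrightarrow U^4_W(L^2_x)$, then apply Theorem~\ref{Thm:PathwiseStrichartz} on each piece of the atom. The only cosmetic difference is the order of presentation---the paper first establishes the atomic bound and then invokes the embeddings, whereas you invoke the embeddings first---but the content is identical.
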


\begin{proof}
The proof is similar to \cite[Lemma 4.3]{IncidenceGeometryStrichartz}, but with some adjustments to account for the modulation and keep track of the exact dependency of implicit constants on $N$ and $T_0$. Let $u = \sum_{t_j \in \pi}1_{[t_{j-1}, t_j)}e^{iW_t\Delta}\phi_j$ be a $U^4_{W}(H^0_x)$ atom. Using Theorem \ref{Thm:PathwiseStrichartz}, 
\begin{align*}
\|P_Cu\|_{L^4_{I, x}}^4 
= 
\sum_{t_j \in \pi} \|[1_{t_{j-1}, t_j)}e^{iW_t\Delta}P_C\phi_j\|_{L^4_{I, x}}^4
\lesssim 
(N^{2\epsilon}\sqrt{T_0})\sum_{t_j \in \pi}\|\phi_j\|_{L^2_x}^4
=
(N^{2\epsilon}\sqrt{T_0}).
\end{align*}
If $u = \sum_{j=1}^{\infty}\lambda_j u_j$ is an atomic decomposition of $u$ such that $\sum_{j=1}^{\infty}\lambda_j \approx \|u\|_{U_W^4(H^0_x)}$ then 
\begin{align*}
\|P_Cu\|_{L^4_{t, x}} 
&\lesssim
\sum_{j=1}^{\infty} \lambda_j \|P_Cu_j\|_{L^4(I \times \T^2)}\\
&\lesssim
(N^{2\epsilon}\sqrt{T_0})^{1/4}\sum_{j=1}^{\infty} \lambda_j \notag\\
&\lesssim
(N^{2\epsilon}\sqrt{T_0})^{1/4}\|u\|_{U^4_{W}(H^0_x)}  \\
&\lesssim (N^{2\epsilon}\sqrt{T_0})^{1/4}\|u\|_{V^2_W(H^0_x)} \notag\\
&\lesssim 
(N^{2\epsilon}\sqrt{T_0})^{1/4}\|u\|_{Y^0(V^2)},
\end{align*}
where the last line follows from Proposition \ref{Prop:FunctionEmbeddings}.
\end{proof}

In order to construct solutions to \eqref{Eq:ModulatedNLSDuhamel} we define the following space.
\begin{df}\label{Defn:Zn}
For $N \in 2^{\N}$ and $\epsilon, \beta > 0$ we set $I_N := [0, \beta/N^{4\epsilon}]$. Let $Z_N$ be the norm, which depends on s,
\begin{equation*}
\|u\|_{Z_N} := \|1_{I_N} \cdot u\|_{Y^0(V^2)} + N^{-s}\|1_{I_N} \cdot u\|_{Y^s(V^2)}.
\end{equation*}
\end{df}
The purpose of Definition \ref{Defn:Zn} is to define a space where the initial data for \eqref{Eq:ModulatedNLSDuhamel} is small and $\mathcal{I}$, defined in \eqref{YsInhomogeneousEstimate}, is a contraction. The primary role of $N$ is to control the high frequency terms of the initial data while the primary role of $\beta$ is to make $\mathcal{I}(\cdot)(t) := \int_0^t e^{i(W_t-W_t') \Delta}(\cdot) \dd t'$ more contractive if the initial data is large in $L^2(\T^2)$. The utility of the parameter $\beta$ is shown in the following lemma. 
\begin{lem}\label{Lem:TrilinearEstimate} For $0 < s \leq 1$ and an integer $N >> \frac{1}{s}$ 
\begin{equation*}
\|\mathcal{I}(u_1u_2u_3)\|_{Z_N} \lesssim \sqrt{\beta} \|u_1\|_{Z_N}\|u_2\|_{Z_N}\|u_3\|_{Z_N},\label{Intermediary5}
\end{equation*}
where each $u_j$ could be replaced by its complex conjugate. Note that the implicit constant depends on $s$ due to the fact that we use Theorem \ref{Thm:PathwiseStrichartz} with $\epsilon < s/5$. 
\end{lem}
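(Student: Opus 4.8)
The plan is to dualize, reduce to a quadrilinear form, and estimate it by a Littlewood--Paley decomposition together with the Strichartz estimate of Lemma~\ref{Lem:YsStrichartz}, applied at each frequency scale rather than only at scale $N$. Throughout I write $T_0 = \beta/N^{4\epsilon}$ for the length of $I_N$ and fix $\epsilon < s/5$. First I would split the $Z_N$ norm of $\mathcal{I}(u_1u_2u_3)$ into its two pieces from Definition~\ref{Defn:Zn} and treat each by the inhomogeneous estimate \eqref{YsInhomogeneousEstimate} (with $p=2$, so $p'=2$). This reduces the claim to the two quadrilinear estimates
\[
\Bigl|\int_0^{T_0}\!\!\int_{\T^2} u_1u_2u_3\,\overline{v}\,\dd x\,\dd t\Bigr|\lesssim \sqrt{\beta}\,\Bigl(\prod_{j=1}^3\|u_j\|_{Z_N}\Bigr)\|v\|_{Y^0(V^2)}
\]
(for the $Y^0(V^2)$ part of the output) and the same bound with $\|v\|_{Y^0(V^2)}$ replaced by $N^s\|v\|_{Y^{-s}(V^2)}$ (for the $N^{-s}\|\cdot\|_{Y^s(V^2)}$ part). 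Complex conjugation of any $u_j$ only flips signs in the frequency balance below and is harmless.

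The key linear input is that Lemma~\ref{Lem:YsStrichartz} holds at every frequency scale: since Theorem~\ref{Thm:PathwiseStrichartz} is valid for an arbitrary rectangle of half-side $M$, the same atomic argument (via Proposition~\ref{Prop:FunctionEmbeddings}) gives $\|P_{\sim M}w\|_{L^4(I_N\times\T^2)}\lesssim (M^{2\epsilon}\sqrt{T_0})^{1/4}\|w\|_{Y^0(V^2)}$ for any $w$. The reason $\sqrt{\beta}$ appears is now visible: applying H\"older's inequality in the form $L^4\cdot L^4\cdot L^4\cdot L^4$ together with this estimate on four dyadic blocks produces the factor $(M_1M_2M_3M_v)^{\epsilon/2}\,T_0^{1/2} = (M_1M_2M_3M_v)^{\epsilon/2}\,N^{-2\epsilon}\sqrt{\beta}$, in which the $N^{-4\epsilon}$ inside $T_0$ is exactly what cancels the Strichartz growth.

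Next I would decompose each factor, $u_j=\sum_{M_j}P_{\sim M_j}u_j$ and $v=\sum_{M_v}P_{\sim M_v}v$. The frequency balance $k_1-k_2+k_3-k_v=0$ forces the two largest of $M_1,M_2,M_3,M_v$ to be comparable to the maximum $M$, and in particular forces at least one \emph{input} frequency to be $\sim M$. Bounding each block by the scale-$M_j$ Strichartz estimate and using $(M_1M_2M_3M_v)^{\epsilon/2}\le M^{2\epsilon}$ yields, for each dyadic tuple, the factor $\sqrt\beta\,(M/N)^{2\epsilon}$ times a product of $Y^0(V^2)$ norms of the blocks. To absorb $(M/N)^{2\epsilon}$ I would trade the top input's $Y^0$ norm for the $Y^s$ part of $Z_N$, using $\|P_{\sim M}u_j\|_{Y^0(V^2)}\le (N/M)^s\|u_j\|_{Z_N}$; this converts the loss into $(M/N)^{2\epsilon-s}$, which is summable because $\epsilon<s/5<s/2$. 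In the second estimate the test function lives in $Y^{-s}(V^2)$, so when the output frequency is the top one we have $\|P_{\sim M_v}v\|_{Y^0(V^2)}\le M_v^s\|v\|_{Y^{-s}(V^2)}$; pairing this $M_v^s$ with the smoothing $(N/M)^s$ of the accompanying high input produces exactly the admissible factor $N^s$, and the residual $(M/N)^{2\epsilon}$ is again beaten by a second $Y^s$ trade. Reassembling the surviving $\ell^2$ sums over dyadic shells into full $Y^0$, $Y^s$, or $Y^{-s}$ norms via the orthogonality relation \eqref{YsSquaring Estimate} and Cauchy--Schwarz, then summing the geometric series in $M/N$, closes both estimates. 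The implicit constant depends on $s$ through $\epsilon<s/5$ and the geometric sums (whose comparison ratio degenerates like $1/s$ as $s\to0$), and $N\gg 1/s$ is the regime in which this choice of $\epsilon$ together with the scale counting is effective.

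The main obstacle is precisely this high-frequency summation. If one naively chops a block at frequency $M$ into scale-$N$ cubes in order to apply Lemma~\ref{Lem:YsStrichartz} verbatim, each such factor loses a full power $(M/N)$, and in the worst configuration---all four frequencies $\sim M$---the cumulative loss $(M/N)^2$ cannot be recovered from only $s\le 1$ derivatives. The resolution, namely applying the Strichartz estimate at each block's own scale (legitimate because Theorem~\ref{Thm:PathwiseStrichartz} holds for all rectangles), cuts the loss down to $(M/N)^{O(\epsilon)}$, after which $\epsilon<s/5$ leaves room for the Sobolev weights. The remaining delicate bookkeeping is distributing the available derivatives correctly across the frequency cases---in particular matching the high output test function's $M_v^s$ against a single input's smoothing to extract exactly the factor $N^s$ demanded by the $N^{-s}\|\cdot\|_{Y^s(V^2)}$ component of the $Z_N$ norm.
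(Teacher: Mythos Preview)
Your reduction to the two quadrilinear estimates via \eqref{YsInhomogeneousEstimate}, and your treatment of the low-output piece \eqref{Intermediaryeq1} by the bound $\|u\|_{L^4(I_N\times\T^2)}\lesssim\beta^{1/8}\|u\|_{Z_N}$ plus H\"older, are fine and match the paper. The gap is in the high-output piece \eqref{Intermediaryeq2}. When $M_v\sim M_1\sim M\ge N$ but the remaining two inputs sit at frequencies $\le N$, your pure $L^4\cdot L^4\cdot L^4\cdot L^4$ estimate still produces a factor $(M/N)^{\epsilon}$ coming from the Strichartz bound applied at the two high scales; the single $Y^s$ trade on the high input is used up cancelling the $M_v^s$ coming from $\|P_{\sim M_v}v\|_{Y^0}\le M_v^s\|P_{\sim M_v}v\|_{Y^{-s}}$, and there is no ``second $Y^s$ trade'' available---the other inputs are at low frequency, so trading them to $Y^s$ gives no gain. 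Concretely, after Cauchy--Schwarz you are left with $\sum_{M\ge N}(M/N)^{2\epsilon}\|P_{\sim M}u_1\|_{Y^s}^2$, which is not controlled by $\|u_1\|_{Y^s}^2$ (only by a $Y^{s+\epsilon}$ norm, which $Z_N$ does not provide).

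The paper avoids this by a bilinear almost-orthogonality step that you omit. One writes $\int u_1u_2\,w_{\sim L}\,v_{\sim K}=\sum_M\int P_{\sim M}(u_1u_2)\,P_{\sim M}(w_{\sim L}v_{\sim K})$, tiles $\Z^2$ into cubes of side $M$, and applies the Strichartz estimate at scale $M$ (not at scales $L,K$) to each cube, exploiting that only $O(1)$ cubes interact. This gives a loss $(1+M^{2\epsilon}/N^{2\epsilon})$ per bilinear factor instead of $(LK/N^2)^{\epsilon/2}$; the point is that when $M$ is large, the product $P_{\sim M}(u_1u_2)$ forces one of $u_1,u_2$ to be at frequency $\gtrsim M$, and \emph{that} supplies the missing decay $(M/N)^{-s}$ to beat $(M/N)^{4\epsilon}$. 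Your scheme puts the $\epsilon$-loss on the fixed scales $L,K$ where no such extra decay is available; the paper's scheme transfers it to the bilinear output scale $M$ where it can be absorbed.
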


\begin{proof}
This proof is heavily adapted from \cite[Lemma 4.4]{IncidenceGeometryStrichartz}. Let $k_s \approx 1/s$ be an integer and $\epsilon = s/5$. In this proof we use $2^{k_s}$-adic cutoffs. For $M \in 2^{k_s\N}$ we denote
\[
P_{\sim M}u = u_{\sim M} = u_{< 2^{k_s}M} - u_{<M}.
\]
Similarly, sums of the form $\sum_{M}$ denote sums over $M \in 2^{k_s\N}$.  Since $\|1_{I_N} \cdot u\|_{Z_{\tilde{N}}} \approx \|u\|_{Z_{N}}$ holds for $\tilde{N} \in [2^{-k_s}N,N]$, we assume further that $N \in 2^{k_s\N}$. By \eqref{YsInhomogeneousEstimate} and Definition \ref{Defn:Zn}, Lemma \ref{Intermediary5} is reduced to showing
\begin{equation}\label{Intermediaryeq1}
  \bigg{|} \int_{I_N \times \T^2} u_1u_2u_3 \cdot v_{< N} \dd x \dd t  \bigg{|} \lesssim \sqrt{\beta}\|u_1\|_{Z_N}\|u_2\|_{Z_N}\|u_3\|_{Z_N}\|v\|_{Y^0(V^2)}, 
\end{equation}
and
\begin{equation}\label{Intermediaryeq2}
  \bigg{|} \int_{I_N \times \T^2} u_1u_2u_3 \cdot v_{\geq N} \dd x \dd t  \bigg{|} \lesssim \sqrt{\beta}\|u_1\|_{Z_N}\|u_2\|_{Z_N}\|u_3\|_{Z_N}N^s\|v\|_{Y^{-s}(V^2)}.
\end{equation}
For $M \geq N$, we partition the interval $I_N:=[0, \beta/N^{4\epsilon}]$ into intervals of length approximately $\beta/M^{4\epsilon}$. Applying Lemma \ref{Lem:YsStrichartz} to each of these intervals we obtain, for each square $C \in \CC_M$,
\begin{equation}
\|1_{I_N}P_Cu\|_{L^4_{t, x}} 
\lesssim 
\bigg{(}M^{2\epsilon}\frac{\sqrt{\beta}}{M^{2\epsilon}}\bigg{)}^{1/4}\bigg{(} \frac{M^{4\epsilon}}{N^{4\epsilon}}\bigg{)}^{1/4}\|u\|_{Y^0(V^2)}
\lesssim
\beta^{1/8}\bigg{(}\frac{M^{\epsilon}}{N^{\epsilon}}\bigg{)}\|u\|_{Y^0(V^2)}.
\label{Intermediary6}
\end{equation}
Applying a $2^{k_s}$-adic decomposition to $u$ and using this estimate on each part
\begin{align*} 
   \|1_{I_N} u \|_{L^4_{t, x}}  
&\lesssim \beta^{1/8}\|1_{I_N}u\|_{Y^0(V^2)} + \beta^{1/8}\sum_{M \geq N} \bigg{(}\frac{M^{\epsilon}}{N^{\epsilon}}\bigg{)}\|1_{I_N}u_{\sim M}\|_{Y^0(V^2)} \\
&=
\beta^{1/8}\|1_{I_N}u\|_{Y^0(V^2)} + \beta^{1/8}N^{-s}\sum_{M \geq N} \bigg{(}\frac{M^{\epsilon}}{N^{\epsilon}}\bigg{)}\frac{M^{-s}}{N^{-s}}\|1_{I_N}u_{\sim M}\|_{Y^s(V^2)} \\
&=
\beta^{1/8}\|1_{I_N}u\|_{Y^0(V^2)} + \beta^{1/8}N^{-s}\|1_{I_N}u\|_{Y^s(V^2)} \\
&=
\beta^{1/8}\|u\|_{Z_N},
\end{align*}
where we use that $\epsilon < s$ to deduce convergence of the sum. Applying this and H\"{o}lder's inequality to the left hand side of \eqref{Intermediaryeq1} proves \eqref{Intermediaryeq1}. To estimate the left hand side of \eqref{Intermediaryeq2} we note that
\begin{align*}
\bigg{|} \int_{I_N \times \T^2} u_1u_2u_3 \cdot v_{\geq N} \dd x \dd t \bigg{|} 
&\lesssim
  \sum_{K \geq N} \bigg{|} \int_{I_N \times \T^2} u_{1_{\geq K/4}}u_2u_3 \cdot v_{\sim K} \dd x \dd t  \bigg{|} \\
&+
  \sum_{K \geq N} \bigg{|} \int_{I_N \times \T^2} u_{1_{\leq K/4}}u_{2_{\geq K/4}}u_3 \cdot v_{\sim K} \dd x \dd t  \bigg{|} \\
&+
  \sum_{K \geq N} \bigg{|} \int_{I_N \times \T^2} u_{1_{\leq K/4}}u_{2_{\leq K/4}}u_{3_{\geq K/4}} \cdot v_{\sim K} \dd x \dd t  \bigg{|},
\end{align*}
and therefore it suffices to estimate expressions of the form
\begin{align}
\sum_{K \geq N} \bigg{|} \int_{I_N \times \T^2} u_1u_2w_{\geq K/4} \cdot v_{\sim K} \dd x \dd t  \bigg{|}  
&\lesssim 
\sum_{M \geq 0}\sum_{K \geq N}\sum_{K/4\leq L \leq K+M} \|P_{\sim M}(u_1 u_2)\|_{L^2(I_N \times \T^2)} \notag\\
&\hspace{24ex}\|P_{\sim M}(w_{\sim L} \cdot v_{\sim K})\|_{L^2(I_N \times \T^2)}.  \label{intermediary7}
\end{align}
To estimate the terms in \eqref{intermediary7} we tile $\Z^2$ using cubes with side length $M$. For generic $f,g \in L^2(\T^2)$,
\begin{align*}
\|1_{I_N} P_{\sim M}(fg)\|_{L^2_{t, x}} 
&\leq
\sum_{\substack{C_1, C_2 \\ \text{dist}(C_1,C_2) \leq M}} \|1_{I_N} P_{C_1}f P_{C_2}g \|_{L^2_{t, x}} \\
&\leq
\sum_{\substack{C_1, C_2 \\ \text{dist}(C_1,C_2) \leq M}} \|1_{I_N} P_{C_1}f\|_{L^4_{t, x}} \|1_{I_N} P_{C_2}g \|_{L^4_{t, x}}\\
\end{align*}
Given a cube $C$ in a tiling of $\Z^2$ by cubes of side length $M$, there are only finitely many other cubes in the tiling that are within a distance $M$ of $C$. Moreover, this bound is independent of $M$. Using this, \eqref{Intermediary6}, and $\eqref{YsSquaring Estimate}$,
\begin{align*}
&\sum_{\substack{C_1, C_2 \\ \text{dist}(C_1,C_2) \leq M}} \|1_{I_N} P_{C_1}f\|_{L^4_{t, x}} \|1_{I_N} P_{C_2}g \|_{L^4_{t, x}} \\
&\lesssim 
(\beta)^{1/4}(1+\frac{M^{2\epsilon}}{N^{2\epsilon}}) \bigg{(} \sum_{C_1} \|P_{C_1}f\|_{Y^0(V^2)}^2 \sum_{C_2} \|P_{C_2}f\|_{Y^0(V^2)}^2 \bigg{)}^{1/2} \\
&=
(\beta)^{1/4}(1+\frac{M^{2\epsilon}}{N^{2\epsilon}})\|f\|_{Y^0(V^2)}\|g\|_{Y^0(V^2)},
\end{align*}
We can use this to bound \eqref{intermediary7} by
\begin{align}
&\sqrt{\beta}\sum_{M \geq 0}\sum_{K \geq N}\sum_{K/4\leq L \leq K+M} \big{(}\|P_{\geq M/4}u_1\|_{Y^0(V^2)}\|u_2\|_{Y^0(V^2)}\notag\\
&\hspace{4ex}+
\|u_1\|_{Y^0(V^2)}\|P_{\geq M/4}u_2\|_{Y^0(V^2)} \big{)}(1+\frac{M^{4\epsilon}}{N^{4\epsilon}})\|w_{\sim L}\|_{Y^0(V^2)}\|v_{\sim K}\|_{Y^0(V^2)} \notag\\
& \lesssim
\sqrt{\beta}\sum_{M \geq 0}\sum_{K \geq N}\sum_{K/4\leq L \leq K+M}(L/K)^{-s}(1+\frac{M^{4\epsilon}}{N^{4\epsilon}}) (1+\frac{M}{N})^{-s} \notag\\
&
\hspace{20ex}\|u_1\|_{Z_N} \|u_2\|_{Z_N}\|w_{\sim L}\|_{Y^s(V^2)}\|v_{\sim K}\|_{Y^{-s}(V^2)} \notag\\
&\leq
\sqrt{\beta}\|u_1\|_{Z_N} \|u_2\|_{Z_N}\sum_{K \geq N}\|v_{\sim K}\|_{Y^{-s}(V^2)}\sum_{K/4\leq L }(L/K)^{-s}\|w_{\sim L}\|_{Y^s(V^2)}.\notag
\end{align}
Here we use that $\epsilon = s/5 < s/4$ to guarantee convergence of the sum in $M$. If $K = 2^{kk_s}$, $L = 2^{\ell k_s}$, and $N=2^{nk_s}$ then 
\begin{align*}
&\sum_{K \geq N}\|v_{\sim K}\|_{Y^{-s}(V^2)}\sum_{K/4\leq L }(L/K)^{-s}\|w_{\sim L}\|_{Y^s(V^2)} \notag \\
&\approx \sum_{k \geq n}\|v_{\sim 2^{kk_s}}\|_{Y^{-s}(V^2)}\sum_{k \leq \ell }2^{k-\ell}\|w_{\sim 2^{\ell k_s}}\|_{Y^s(V^2)}.
\end{align*}
Applying Young's inequality and $\eqref{YsSquaring Estimate}$ proves the desired result.
\end{proof}

\section{Local Well-Posedness}\label{Sec:Local}
We construct solutions to \eqref{Eq:ModulatedNLSDuhamel} by performing a contraction in a ball in the space $Z_N$ intersected with functions that are continuous in time into $H^s$. Note that this metric space will be complete due to the fact that the set of continuous $V^p(X)$ functions are always closed in $V^p(X)$ for any Banach space $X$. We use our Strichartz estimates from Theorem \ref{Thm:PathwiseStrichartz} to control the linear part of \eqref{Eq:ModulatedNLSDuhamel} and we use Lemma \ref{Lem:TrilinearEstimate} to control the nonlinear part of \eqref{Eq:ModulatedNLSDuhamel}.

\begin{thm}\label{Thm:LocalWellPosedness} Suppose that for all $\rho < 1$, $W_t$ is a $(\rho, 1/2)$-irregular function, as in Definition \ref{Defn:RhoGammaIrregular}. Let $s > 0$, $N >> \frac{1}{s}$, $u_0 \in H^s(\T^2)$ be fixed, and $\Lambda \geq \|u_0\|_{H^0} + N^{-s}\|u_0\|_{H^s}$. Recall from Definition \ref{Defn:Zn} that for any given $N$, $I_N := \beta/N^{4\epsilon}$ where $\beta$ is to be chosen depending on $\Lambda$. Define
\begin{equation*}
X_{N}:= \{u \in C^0([0,I_N], H^s(\T^2)) \cap Z_N : \|u\|_{Z_N} \leq 2\Lambda \}.
\end{equation*}
When $N >> \frac{1}{s}$, there exists a unique solution $u \in X_N$, depending lipschitz continuously on initial data, to the Duhamel formulation of the modulated cubic nonlinear Schr\"{o}dinger equation \eqref{Eq:ModulatedNLSDuhamel}:
\begin{equation}\label{Eq:Duhamel}
u(t,x) = e^{iW_t\Delta}u_0(x) -i\int_0^{t}e^{i(W_t-W_{t'})\Delta}|u(t',x)|^2u(t',x) \dd t'.
\end{equation}
\end{thm}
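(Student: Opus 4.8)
The plan is to realize the solution as the unique fixed point of the Duhamel map
\[
\Phi(u)(t,x) := e^{iW_t\Delta}u_0(x) - i\,\mathcal{I}(|u|^2u)(t,x)
\]
on the complete metric space $X_N$, via the Banach fixed point theorem. Completeness of $X_N$ follows from the closedness of continuous $V^2(H^s_x)$ functions inside $V^2(H^s_x)$ (as noted just before the statement) together with the fact that $X_N$ is a closed ball in $Z_N$. First I would confirm that $\Phi$ preserves the continuity-in-time-into-$H^s$ constraint: since $W_t$ is continuous, $t\mapsto e^{iW_t\Delta}u_0$ is continuous into $H^s$ by dominated convergence on the Fourier side, and the Duhamel term inherits continuity from the $V^2$ structure, so $\Phi$ indeed sends $C^0([0,I_N],H^s)\cap Z_N$ into itself at the level of regularity.

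For the self-mapping property I would estimate $\|\Phi(u)\|_{Z_N}$ for $u\in X_N$ by splitting into linear and nonlinear parts. The linear part is controlled directly by \eqref{YsHsEstimate}: unpacking the $Z_N$ norm gives
\[
\|1_{I_N}e^{iW_t\Delta}u_0\|_{Z_N} \lesssim \|u_0\|_{H^0}+N^{-s}\|u_0\|_{H^s}\leq \Lambda .
\]
For the nonlinear part, Lemma \ref{Lem:TrilinearEstimate} yields $\|\mathcal{I}(|u|^2u)\|_{Z_N}\lesssim \sqrt{\beta}\,\|u\|_{Z_N}^3\leq \sqrt{\beta}\,(2\Lambda)^3$. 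Choosing $\beta$ small, depending only on $\Lambda$ (roughly $\sqrt{\beta}\lesssim \Lambda^{-2}$), makes the nonlinear contribution at most $\Lambda$, so $\|\Phi(u)\|_{Z_N}\leq 2\Lambda$ and $\Phi$ maps $X_N$ into itself.

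For the contraction, and simultaneously for Lipschitz dependence, I would write for $u,v\in X_N$
\[
\Phi(u)-\Phi(v) = -i\,\mathcal{I}(|u|^2u-|v|^2v),
\]
and expand $|u|^2u-|v|^2v$ telescopically as a finite sum of trilinear expressions, each summand carrying exactly one factor of $u-v$ (or $\overline{u}-\overline{v}$) and two factors drawn from $u,\overline{u},v,\overline{v}$. Applying Lemma \ref{Lem:TrilinearEstimate} (which allows a conjugate on any factor) to each summand gives
\[
\|\Phi(u)-\Phi(v)\|_{Z_N}\lesssim \sqrt{\beta}\,\big(\|u\|_{Z_N}^2+\|v\|_{Z_N}^2\big)\,\|u-v\|_{Z_N}\lesssim \sqrt{\beta}\,(2\Lambda)^2\,\|u-v\|_{Z_N}.
\]
Shrinking $\beta$ further so that the implicit constant times $\sqrt{\beta}(2\Lambda)^2$ is at most $\tfrac12$ makes $\Phi$ a contraction, and the fixed point theorem supplies the unique $u\in X_N$ solving \eqref{Eq:Duhamel}. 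For Lipschitz dependence I would rerun the difference computation with data $u_0,\tilde u_0$ and solutions $u,\tilde u$, picking up an extra linear term bounded by $\|u_0-\tilde u_0\|_{H^0}+N^{-s}\|u_0-\tilde u_0\|_{H^s}$ via \eqref{YsHsEstimate}, then absorbing the nonlinear difference into the left-hand side using the contraction bound.

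The main obstacle is not analytical, since the hard harmonic-analysis input is already packaged into Lemma \ref{Lem:TrilinearEstimate}; it is bookkeeping. One must verify that a single choice of $\beta$ (equivalently of $I_N$) simultaneously secures the self-mapping and contraction bounds, that this $\beta$ depends only on $\Lambda$ and hence only on the $H^0$ and $N^{-s}H^s$ sizes of $u_0$, and that the limiting fixed point genuinely lies in $C^0([0,I_N],H^s)$ rather than merely in $Z_N$. I would also keep careful track of the fact that the constant in Lemma \ref{Lem:TrilinearEstimate} depends on $s$ through the choice $\epsilon<s/5$ (the reason for the hypothesis $N\gg 1/s$), and confirm that this $s$-dependence does not obstruct the $\beta$-smallness selection.
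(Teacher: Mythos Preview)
Your argument is correct and follows essentially the same route as the paper: Banach fixed point on $X_N$, with the linear piece controlled by \eqref{YsHsEstimate} and the nonlinear piece (both for self-mapping and contraction) by Lemma~\ref{Lem:TrilinearEstimate} after expanding $|u|^2u-|v|^2v$ as a sum of trilinear terms. The paper additionally proves uniqueness in the larger space $Z_{N,\beta}$ (not merely in the ball $X_N$) by an iteration with auxiliary parameters $\Lambda'$ and $\beta'<\beta$; you obtain only uniqueness in $X_N$ from the contraction, which is all the theorem statement literally demands, but be aware that the paper's stronger uniqueness is what one typically needs to iterate the local result.
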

\begin{proof}
Because $|w|^2w - |v|^2v = (w-v)(|w|^2+|v|^2) +wv\overline{(w-v)}$, Lemma \ref{Lem:TrilinearEstimate} implies that there exists $\beta > 0$ such that $w \mapsto \mathcal{I}(|w|^2w)$ is a contraction on $X_N$ with lipschitz constant no more than $\frac{1}{2}$. Using (\ref{YsHsEstimate}) we deduce that 
\[
w \mapsto e^{iW_t\Delta}u_0 - i\mathcal{I}(|w|^2w)
\]
is also contraction on $X_N$ with Lipschitz constant no more than $\frac{1}{2}$. It follows that \eqref{Eq:Duhamel} has a solution in $X_N$. If $u_0,v_0 \in H^s$ with $\max(\|u_0\|,\|v_0\|) \leq \frac{\Lambda}{2} $ then using (\ref{YsHsEstimate}),
\begin{equation*}
\|u-v\|_{Z_n} \leq \|e^{iW_t\Delta}(u_0-v_0)\|_{Z_n} + \|\mathcal{I}(|u|^2u-|v|^2v)\|_{Z_N} \leq \|u_0-v_0\|_{H^s} + \frac{1}{2}\|u-v\|_{Z_N},
\end{equation*}
from which it follows that the solution map for Equation \eqref{Eq:Duhamel} is Lipschitz continuous on the the ball $\{v \in H^s: \|v\|_{H^s} \leq \frac{\Lambda}{2}\}$. 

Lastly we prove that solutions in $Z_N$ are unique. For this argument we need to be more precise about the implicit dependencies of the space $Z_N$ and $X_N$ on the parameters $\beta$ and $\Lambda$. let $Z_{N, \beta}$ denote the space $Z_N$, defined in Definition \ref{Defn:Zn}, with parameter $\beta$. Given $\beta $ and $\Lambda$ we write $X_{N, \beta, \Lambda} := \{u \in C^0([0, \beta/N^{4\epsilon}], H^s(\T^2)) \cap Z_{N, \beta}: \|u\|_{Z_{N, \beta}} < 2\Lambda\}$. Suppose that $u,v \in Z_{N, \beta}$ are two solutions to \eqref{Eq:ModulatedNLSDuhamel} on the interval $[0, \beta/N^{4\epsilon}]$. Choose $\Lambda'$ sufficiently large so that $\|u\|_{Z_{N, \beta}} \|v\|_{Z_{N, \beta}} < 2\Lambda'$. As in the construction of local solutions to \eqref{Eq:ModulatedNLSDuhamel} we may choose $\beta' < \beta$ so that $w \mapsto e^{iW_t\Delta}u_0 - i\mathcal{I}(|w|^2w)$ is a contraction on $X_{N, \beta', \Lambda'}$ with Lipschitz constant no more than $\frac{1}{2}$. It follows that $u=v$ on the interval $[0, \beta'/N^{4\epsilon}]$. Because of \eqref{YsLInfinityLowerBoundEstimate} $\|u\|_{L^{\infty}([0, \beta/N^{4\epsilon}], H^s)}, \|v\|_{L^{\infty}([0, \beta/N^{4\epsilon}], H^s)} < 2\Lambda $ so we may iterate this argument to conlcude that $u=v$ on the entire interval $[0, \beta/N^{4\epsilon}]$.
\end{proof}

Applying Theorem \ref{Thm:BrownianIsRhoIrregular} we immediately obtain the following corollary

\begin{cor}\label{Cor:PathWiseLocalWell-posednessBrownian} If $W_t$ is a fractional Brownian motion with Hurst parameter\footnote{When $H=\frac{1}{2}$ this equation is also known as the cubic nonlinear Schr\"{o}dinger equation with white noise dispersion.} $H \leq \frac{1}{2}$ then \eqref{Eq:ModulatedNLSDuhamel} with Duhamel formulation
\begin{equation*}
u(t,x) = e^{iW_t\Delta}u_0(x) -i\int_0^{T}e^{i(W_t-W_{t'})\Delta}|u(t',x)|^2u(t',x) \dd t'
\end{equation*}
is almost surely pathwise locally well posed i.e Theorem \ref{Thm:LocalWellPosedness} applies.
\end{cor}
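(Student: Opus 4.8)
The plan is to deduce the corollary directly from Theorem \ref{Thm:LocalWellPosedness}, whose sole hypothesis is that the modulation $W_t$ is $(\rho, 1/2)$-irregular in the sense of Definition \ref{Defn:RhoGammaIrregular} for every $\rho < 1$. Thus the entire task is to exhibit a single event $\Omega_0 \subset \Omega$ of probability one on which the sample path $t \mapsto B^H_t(\omega)$ satisfies this irregularity requirement simultaneously for all $\rho < 1$; once this is done, Theorem \ref{Thm:LocalWellPosedness} applies verbatim to each such path, yielding the asserted pathwise local well-posedness.

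First I would invoke Theorem \ref{Thm:BrownianIsRhoIrregular}. Since $H \leq 1/2$ we have $1/(2H) \geq 1$, so every $\rho < 1$ also satisfies $\rho < 1/(2H)$. Hence for each fixed $\rho < 1$ there is an event $A_\rho$ with $P(A_\rho) = 1$ on which $B^H$ is $(\rho, \gamma)$-irregular for some $\gamma = \gamma(\rho) > 1/2$. To pass from the exponent $\gamma > 1/2$ to the exponent $1/2$ demanded by Theorem \ref{Thm:LocalWellPosedness}, I would use the elementary bound $|s - t|^{\gamma} = |s-t|^{1/2}\,|s-t|^{\gamma - 1/2} \leq T^{\gamma - 1/2}\,|s-t|^{1/2}$, valid on any bounded interval $[0,T]$; dividing through shows that $(\rho,\gamma)$-irregularity with $\gamma > 1/2$ forces $(\rho, 1/2)$-irregularity on $[0,T]$, exactly as recorded in the discussion following Theorem \ref{Thm:BrownianIsRhoIrregular}. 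Since the solutions produced by Theorem \ref{Thm:LocalWellPosedness} live on the bounded interval $I_N$, this restriction is harmless.

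The one point requiring care is that Theorem \ref{Thm:LocalWellPosedness} needs $(\rho, 1/2)$-irregularity for \emph{all} $\rho < 1$ on one and the same path, whereas $\{\rho : \rho < 1\}$ is uncountable and an uncountable intersection of probability-one events need not have probability one. To handle this I would fix a sequence $\rho_n \uparrow 1$ and set $\Omega_0 := \bigcap_{n} A_{\rho_n}$, which has probability one as a countable intersection. The key remaining observation is the monotonicity of the irregularity condition in $\rho$: because $(1 + |\tau|)^{\rho}$ is nondecreasing in $\rho$ for $|\tau| \geq 0$, the supremum defining $(\rho, 1/2)$-irregularity is nondecreasing in $\rho$, so $(\rho_n, 1/2)$-irregularity implies $(\rho, 1/2)$-irregularity for every $\rho \leq \rho_n$. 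Given an arbitrary $\rho < 1$, I choose $n$ with $\rho_n > \rho$ and conclude that every $\omega \in \Omega_0$ yields a path that is $(\rho, 1/2)$-irregular on $I_N$.

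With $\Omega_0$ in hand the conclusion is immediate: for each $\omega \in \Omega_0$ the path $t \mapsto B^H_t(\omega)$ meets the hypothesis of Theorem \ref{Thm:LocalWellPosedness}, which therefore supplies a unique local solution depending Lipschitz-continuously on the initial data, and this holds for almost every $\omega$. I expect the main (indeed only) obstacle to be the measure-theoretic simultaneity step of the previous paragraph — reducing the uncountable family $\{A_\rho\}_{\rho < 1}$ to a countable one and upgrading via monotonicity in $\rho$; the rest is a direct substitution into the two cited theorems.
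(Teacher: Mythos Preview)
Your proposal is correct and follows the same approach as the paper, which simply states that the corollary follows immediately from Theorem~\ref{Thm:BrownianIsRhoIrregular}. You have filled in the details the paper leaves implicit --- the passage from $\gamma>1/2$ to $\gamma=1/2$ on bounded intervals (already noted in the discussion after Theorem~\ref{Thm:BrownianIsRhoIrregular}) and the reduction of the uncountable family $\{\rho<1\}$ to a countable sequence via monotonicity in $\rho$ --- but the underlying idea is identical.
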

\section{Additional Results}\label{Sec:AdditionalResults}
In this section we discuss convergence of the linear flow for the propagator  $\{e^{iW_t\Delta}\}_{t \geq 0}$ as well as conservation of mass for \eqref{Eq:ModulatedNLSDuhamel}. Unlike the other sections of this paper we don't restrict ourselves to working in the two dimensional setting. The question of the convergence of the linear Schr\"{o}dinger flow to initial data in the deterministic case was first proposed by Carleson in \cite{carleson1980}. Since then the problem has been studied in numerous different contexts, see \cite{MoyuaVega2008DEquals1SFlow,pointwiseconvergenceschrodingerflow,du2017sharpschrodingermaximalestimate, du2019sharpl2estimateschrodinger} and references therein. Typically, almost everywhere convergence of the linear flow to initial data is not guaranteed to hold for functions that are not sufficiently regular. In both the periodic and nonperiodic settings there are examples of functions lying in the Sobolev space $H^s(\T^d)$ for $s < \frac{d}{2(d+2)}$ where almost everywhere convergence of the linear flow to initial data fails\cite{lucà2015improvednecessaryconditionschrodinger, demeter2016schrodingermaximalfunctionestimates}. Some of the strongest positive results in the nonperiodic setting appear in \cite{du2017sharpschrodingermaximalestimate, du2019sharpl2estimateschrodinger} where it is shown that the linear schr\"{o}dinger flow converges pointwise almost everywhere to initial data when the initial data lies in $H^s(\R^d)$ for $s > \frac{d}{2(d+1)}$. In the periodic case Moyua and Vega showed that the deterministic Schr\"{o}dinger flow converges pointwise almost everywhere to initial data when the initial data lies in $H^s(\T^d)$ for $s > \frac{d}{d+2}$ \cite{MoyuaVega2008DEquals1SFlow}. However, their results are only valid in the case $d=1$. Later this result was extended to the case $d=2$ and finally $d \geq 3$ in \cite{WangZhang2019DEquals2SFlow} and \cite{pointwiseconvergenceschrodingerflow}  respectively. We show that a large class of stochastic processes $W_t$ satisfy a similar property as long as their distribution is sufficiently concentrated near $0$ when $t$ is small. The problem turns out to be trivial when one considers a single modulation function $W_t$ so instead we consider stochastic processes and prove convergence in $L^p(\Omega)$ where the process is defined on the probability space $(\Omega, \FF, P)$. 

\begin{thm}\label{Thm:ConvergenceLinearFlow} 
Let $s > \frac{d}{d+2}$ and $W_t$ be a stochastic process on a probability space $(\Omega, \FF, P)$ with an associated filtration $\FF_t := \sigma\{W_s: 0 \leq s \leq t\}$. Suppose that $1 \leq p < \infty$ and $f(\omega, x) \in  L^p_{\omega}L^{2\frac{d+2}{d}}_x$. Then $e^{iW_t(\omega)\Delta}f(\omega, x) \stackrel{t\to0}{\to}f(\omega, x)$ for almost every $(\omega, x) \in \Omega \times \T^2$ and in $L^p_{\omega}L^{2\frac{d+2}{d}}_x$ provided that
\begin{enumerate}
    \item $W_t \to 0$ as $t \to 0$ almost surely.
    \item $\|\sup_{0 \leq t \leq 1}|W_t|^{\frac{d}{2(d+2)}}\|_{L^p_{\omega}} < \infty$.\label{SecHypoth}
    \item $f(\omega, x)$ is measurable with respect to the $\sigma$-algebra $\FF$ but independent of the $\sigma$-algebra $\FF_{1}$.\label{ThirdHypoth}
\end{enumerate} 
\end{thm}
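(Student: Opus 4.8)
The plan is to run a Stein-type maximal principle in the mixed space $L^p_\omega L^{q}_x$, where $q := \frac{2(d+2)}{d}$ is the diagonal Strichartz exponent. I would first reduce the theorem to two ingredients: (a) convergence on a dense subclass, and (b) the maximal inequality
\[
\left\| \sup_{0 < t < 1} \left| e^{iW_t\Delta} f \right| \right\|_{L^p_\omega L^q_x} \lesssim \|f\|_{L^p_\omega L^q_x}.
\]
Granting (a) and (b), both the a.e.\ convergence on $\Omega \times \T^d$ and the $L^p_\omega L^q_x$ convergence follow by the usual $\varepsilon/3$ argument: writing $f = g + r$ with $g$ in the dense class and $\|r\|_{L^p_\omega L^q_x}$ small, the term $g$ converges by (a), while the contribution of $r$ is controlled by $\|\sup_t |e^{iW_t\Delta} r|\|_{L^p_\omega L^q_x} + \|r\|_{L^p_\omega L^q_x} \lesssim \|r\|_{L^p_\omega L^q_x}$ via (b), after which one lets $\|r\| \to 0$.

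For the dense class I would take trigonometric polynomials in $x$ with $\FF$-measurable, $\FF_1$-independent, $L^p_\omega$ coefficients, i.e. $g(\omega,x) = \sum_{|k|\le K} a_k(\omega) e^{ik\cdot x}$. These are dense in the target space, and for such $g$ one has $e^{iW_t\Delta} g - g = \sum_{|k|\le K} a_k(\omega)\big( e^{iW_t|k|^2} - 1\big) e^{ik\cdot x}$, whose $L^q_x$ norm is at most $\sum_{|k|\le K} |a_k(\omega)| \,|e^{iW_t|k|^2}-1|$. By hypothesis (1), $W_t(\omega) \to 0$ for a.e.\ $\omega$, so this tends to $0$ pointwise in $\omega$ and is dominated by $2\sum_{|k|\le K}|a_k(\omega)| \in L^p_\omega$; dominated convergence then gives both the a.e.\ and the $L^p_\omega L^q_x$ statements for $g$. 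This is the sense in which the problem is elementary for a single smooth profile; note also that $s > \frac{d}{d+2}$ is precisely the Sobolev threshold for $H^s(\T^d)\hookrightarrow L^{q}(\T^d)$, so the classical Schr\"odinger maximal theorem could alternatively supply convergence on the dense class $H^s$.

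The heart of the matter is the maximal inequality (b), and this is where I expect the real difficulty. The first step is a monotone reduction: for fixed $\omega$, setting $R(\omega) := \sup_{0\le t\le1}|W_t(\omega)|$ and substituting $\sigma = W_t(\omega)$ turns the irregular modulation into a genuine parameter, giving the pointwise bound $\sup_{0<t<1}|e^{iW_t(\omega)\Delta} f| \le \sup_{|\sigma|\le R(\omega)} |e^{i\sigma\Delta} f|$. I would then seek a deterministic maximal Strichartz estimate $\|\sup_{|\sigma|\le R}|e^{i\sigma\Delta} h|\|_{L^q_x(\T^d)} \lesssim \Psi(R)\,\|h\|_{X}$ whose dependence on the window length is $\Psi(R) \lesssim 1 + R^{1/q}$; the point is that $\tfrac1q = \tfrac{d}{2(d+2)}$ is exactly the exponent appearing in hypothesis (2). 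Such an estimate should come from the $L^q_{t,x}$ Strichartz bound on $\T^d$ at the diagonal exponent (i.e.\ Bourgain--Demeter $\ell^2$-decoupling), upgraded from a space-time integral to a supremum in the $\sigma$-variable by a one-dimensional Sobolev embedding, with the length of the window $[-R,R]$ producing the power $R^{1/q}$.

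Finally I would assemble the pieces using hypothesis (3). Since $R(\omega)$ is $\FF_1$-measurable while $f$ is independent of $\FF_1$, the quantities $\Psi(R(\omega))$ and $\|f(\omega,\cdot)\|_{L^q_x}$ are independent, so the $\Omega$-integral factors:
\[
\left\|\Psi(R)\,\|f\|_{L^q_x}\right\|_{L^p_\omega} = \left\|\Psi(R)\right\|_{L^p_\omega}\,\left\|f\right\|_{L^p_\omega L^q_x},
\]
and $\|\Psi(R)\|_{L^p_\omega} \lesssim 1 + \|R^{1/q}\|_{L^p_\omega} = 1 + \big\|\sup_{0\le t\le1}|W_t|^{d/(2(d+2))}\big\|_{L^p_\omega} < \infty$ by hypothesis (2), which yields (b). The main obstacle is establishing the deterministic maximal estimate with the sharp window dependence $R^{1/q}$: passing from the space-time Strichartz norm to a pointwise-in-time supremum without losing derivatives is delicate, and it is plausible that the clean $L^q_x \to L^q_x$ bound must be replaced by one with an $L^2_x$-type right-hand side (legitimate on the torus since $q > 2$ forces $L^q \subset L^2$) together with a frequency-truncation argument, or that the averaging in $\omega$ is genuinely needed to tame the near-rational focusing times at which the purely deterministic maximal operator fails to be bounded.
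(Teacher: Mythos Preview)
Your overall framework matches the paper's: both establish a maximal inequality, substitute $\sigma = W_t(\omega)$ to reduce to the deterministic propagator on the window $[-R(\omega),R(\omega)]$ with $R(\omega)=\sup_{0\le t\le 1}|W_t(\omega)|$, and use the independence hypothesis (3) to factor the $L^p_\omega$ integral, with hypothesis (2) supplying exactly the exponent $1/q=d/(2(d+2))$. The paper likewise uses hypothesis (1) to reduce the a.e.\ statement directly to the deterministic Carleson theorem, essentially as in your treatment of the dense class.

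The gap is your maximal inequality (b). The bound $\|\sup_{0<t<1}|e^{iW_t\Delta}f|\|_{L^p_\omega L^q_x}\lesssim\|f\|_{L^p_\omega L^q_x}$ is not what the paper proves, and the deterministic ingredient you propose, $\|\sup_{|\sigma|\le R}|e^{i\sigma\Delta}h|\|_{L^q_x}\lesssim(1+R^{1/q})\|h\|_{L^q_x}$, is not available: the Schr\"odinger maximal operator is not known to be bounded $L^q\to L^q$, and your Strichartz-plus-time-Sobolev route necessarily loses spatial derivatives (indeed $\partial_\sigma e^{i\sigma\Delta}=i\Delta e^{i\sigma\Delta}$ costs two). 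The paper instead proves the maximal estimate with $\|f\|_{L^p_\omega H^s_x}$ on the right, via precisely the frequency-truncation fix you flag in your last paragraph. Concretely, one decomposes dyadically in frequency and imports the known deterministic frequency-localized Carleson bound $\|\sup_{0\le\tau\le1}|e^{i\tau\Delta}P_N h|\|_{L^q_x}\lesssim N^{d/(d+2)+}\|h\|_{L^2_x}$; the window scaling $R^{1/q}$ then comes not from Strichartz but from covering $[-R,R]$ by $\sim R$ unit intervals, translating the unit-interval estimate using the $L^2$-unitarity of $e^{i\tau_0\Delta}$, and summing trivially in $\ell^q$. Thereafter the paper partitions $\Omega$ into the level sets $A_j=\{R\in[2^{j-1},2^j)\}$ and uses independence to factor, exactly as you outline. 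So your closing paragraph correctly identifies the repair, and the paper's argument is its execution.
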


Moreover, we have the maximal function estimate
\begin{equation}\label{Intermediary30}
\|\sup_{0\leq t \leq 1}|e^{iW_t\Delta} f|\|_{L^p_{\omega}L^{2\frac{d+2}{d}}} \lesssim \|f\|_{L^p_{\omega}H^s_x}.
\end{equation}

\begin{rem}
The first and second hypothesis of Theorem   \ref{Thm:ConvergenceLinearFlow} hold for many commonly studied stochastic processes. In particular they hold for fractional Brownian motion.    
\end{rem}

\begin{proof}
Because $W_t(\omega) \to 0$ pointwise almost surely, convergence of the linear flow for almost every $(\omega, x) \in \Omega \times \T^2$ reduces to the similar problem for the determinisitc Schr\"{o}dinger propagator $\{e^{it\Delta}\}$, which was addressed in \cite[Proposition 3.1]{pointwiseconvergenceschrodingerflow}. In order to show convergence of the linear flow in $L^p_{\omega}L^{2\frac{d+2}{d}}_x$ it is enough to prove \eqref{Intermediary30} and apply the dominated convergence theorem. Using a standard dyadic decomposition we can prove convergence in $L^p_{\omega}L^{2\frac{d+2}{d}}_x$ if we show the maximal estimate
\begin{equation}\label{Intermediary31}
\|\sup_{0\leq t \leq 1}|e^{iW_t\Delta}P_N f|\|_{L^p_{\omega}L^{2\frac{d+2}{d}}} \lesssim N^{\frac{d}{d+2}+}\|f\|_{L^p_{\omega}L^2_x}.
\end{equation}
Here the $+$ in $N^{\frac{d}{d+2}+}$ denotes that for all $\epsilon > 0$ the estimate holds for $N^{\frac{d+2}{d}+\epsilon}$ with an implicit constant depending on $\epsilon$. The idea behind the proof of \eqref{Intermediary31} is to partition $\Omega$ into sets based on the value of $\sup_{0 \leq t \leq 1}|W_t|$. The second hypothesis of this theorem guarantees that this function is small except on a set of small probability. If $\sup_{0 \leq t \leq 1}|W_t|$ is small then we can reduce the problem to the deterministic case by replacing $e^{iW_t\Delta}$ with $e^{i\tau\Delta}$ where $\tau$ ranges over a small interval. Once we reduce to the deterministic case we use the maximal estimate obtained in \cite[Proposition 3.1]{pointwiseconvergenceschrodingerflow}. For convenience we restate the estimate here:
For all $f \in L^2(\T^d)$,
\begin{equation}
\|\sup_{0 \leq t \leq 1}|e^{it\Delta}P_Nf|\|_{L_x^{2\frac{d+2}{d}}} \lesssim N^{\frac{d}{d+2}+}\|f\|_{L^2_x}\label{DeterminsiticMaximalEstimate}
\end{equation}

Let $A_0 := \{\omega\in \Omega: \sup_{0 \leq t \leq 1}|W_t| \in [0,1) \} $ and, for $j \geq 1$, $A_j := \{\omega \in \Omega: \sup_{0 \leq t \leq 1}|W_t| \in [2^{j-1}, 2^{j})\} $. Note that hypothesis \eqref{SecHypoth} guarantees that $\Omega = B \sqcup \bigsqcup_{j=0}^{\infty}A_j$ where $P[B]=0$.  Partitioning $\Omega$ into the $A_j$ sets,
\begin{align}
&\|\sup_{0\leq t \leq 1}|e^{iW_t\Delta}P_N f|\|_{L^p_{\omega}L^{2\frac{d+2} {d}}_x}^p \notag\\
&=
\sum_{j=0}^{\infty} \|1_{A_j}\sup_{0\leq t \leq 1}|e^{iW_t\Delta}P_N f|\|_{L^p_{\omega}L^{2\frac{d+2}{d}}_x}^p \notag\\
&=
\sum_{j=0}^{\infty} \|1_{A_j}\sup_{\{\tau: \tau = W_t \text{ for some }t\in [0,1]\}}|e^{i\tau\Delta}P_Nf|\|_{L^p_{\omega}L^{2\frac{d+2}{d}}_x}^p \notag\\
&\lesssim
\sum_{j=0}^{\infty} \|1_{A_j}\sup_{-2^{j} \leq \tau \leq 2^j}|e^{i\tau\Delta}P_Nf|\|_{L^p_{\omega}L^{2\frac{d+2}{d}}_x}^p \label{Intermediary33}\\
&\lesssim 
N^{\frac{pd}{d+2}+}\sum_{j=1}^{\infty} 2^{\frac{jdp}{2(d+2)}} \|1_{A_j}\|f\|_{L^2_x}\|_{L^p_{\omega}}^p. \label{Intermediary32}
\end{align}
Here the last estimate follows from applying \eqref{DeterminsiticMaximalEstimate} repeatedly on intervals of length $1$ to the $2\frac{d+2}{d}$ power of \eqref{Intermediary33}, taking a $\frac{d}{2(d+2)}$ root, and using that $e^{it\Delta}$ is a unitary operator on $L^2(\T^d)$. Hypothesis \eqref{ThirdHypoth} guarantees that $f$ and $1_{A_j}$ are independent. Using this, we bound \eqref{Intermediary32} by
\begin{align*}
N^{\frac{pd}{d+2}+}\|f\|_{L^p_{\omega}L^2_x}^p \sum_{j=0}^{\infty}2^{\frac{jdp}{2(d+2)}}P[A_j] 
&\lesssim 
N^{\frac{pd}{d+2}+}\|\sup_{0 \leq t \leq 1}|W_t|^{\frac{d}{2(d+2)}}\|^p_{L^p_{\omega}}\|f\|_{L^p_{\omega}L^2_x}^p\\
&\lesssim
N^{\frac{pd}{d+2}+}\|f\|_{L^p_{\omega}L^2_x}^p
\end{align*}
\end{proof}

Conservation of the $L^2$ mass of a local solution to a nonlinear Schr\"{o}dinger equation can be used to control the growth of the solution and extend local results to global results. For example this is used in \cite[Theorem 1.4]{IncidenceGeometryStrichartz} to prove a global well-posedness result for the deterministic cubic NLS. We were unable to extend our local result, Theorem \ref{Thm:LocalWellPosedness}, to a global result, however conservation of mass still holds for solutions to (MNLS).

\begin{prop}\label{Prop:ConservationOfMass}
If $W_t$ is a modulation function and $u(t, x) \in L^{\infty}([0,T], L^2_x)$ satisfies the Duhamel equation
\begin{equation}\label{eq: TruncatedWeakNLSrho}
u(t, x) = e^{iW_t\Delta}u(0, x) -i\int_0^t e^{i(W_t -W_{t'})\Delta}|u(t', x)|^2u(t', x) \dd t', 
\end{equation}
Then the $L^2(\T^2)$ mass, $\|u(t, x)\|_{L^2_{x}}$, is conserved on the interval $[0,T]$.
\end{prop}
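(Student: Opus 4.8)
The plan is to pass to the interaction representation $v(t) := e^{-iW_t\Delta}u(t)$ and exploit the unitarity of the propagator. Since $\Delta$ is self-adjoint on $L^2(\T^2)$, the operator $e^{iW_t\Delta}$ acts on Fourier coefficients as $f_k \mapsto e^{-iW_t|k|^2}f_k$ and is therefore an $L^2$-isometry for each fixed $t$; in particular $\|v(t)\|_{L^2_x} = \|u(t)\|_{L^2_x}$, so it suffices to show that $t \mapsto \|v(t)\|_{L^2_x}$ is constant. Applying the bounded operator $e^{-iW_t\Delta}$ to \eqref{eq: TruncatedWeakNLSrho}, moving it inside the Bochner integral, and using the group law $e^{-iW_t\Delta}e^{i(W_t - W_{t'})\Delta} = e^{-iW_{t'}\Delta}$ to collapse the outer propagator, I obtain
\begin{equation*}
v(t) = u(0,x) - i\int_0^t e^{-iW_{t'}\Delta}\big(|u(t',x)|^2 u(t',x)\big)\,\dd t'.
\end{equation*}

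First I would record the regularity that the hypothesis implicitly carries: for the integral in \eqref{eq: TruncatedWeakNLSrho} to define an element of $L^2_x$ one needs $t' \mapsto e^{i(W_t - W_{t'})\Delta}(|u|^2u)$ to be Bochner integrable into $L^2(\T^2)$, and since the propagator is an isometry this is exactly the condition $|u|^2u \in L^1([0,T], L^2(\T^2))$, which I take as the standing interpretation of ``$u$ satisfies \eqref{eq: TruncatedWeakNLSrho}''. Granting this, the displayed formula exhibits $v$ as $u(0,\cdot)$ plus the Bochner integral of an $L^1_{t'}L^2_x$ function, so $v$ is absolutely continuous as an $L^2(\T^2)$-valued function, with $v'(t) = -i\,e^{-iW_t\Delta}(|u(t)|^2u(t))$ for almost every $t$. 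This is the step where the regularity of the cubic term matters, and I expect it to be the main obstacle: the bare assumption $u \in L^\infty_t L^2_x$ does not control $|u|^2u$, so the argument genuinely depends on the extra integrability built into $u$ solving the equation rather than on $L^\infty_t L^2_x$ alone.

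Finally I would differentiate the squared norm. Writing $\langle f, g\rangle := \int_{\T^2} f\overline{g}\,\dd x$, absolute continuity gives, for almost every $t$,
\begin{equation*}
\frac{\dd}{\dd t}\|v(t)\|_{L^2_x}^2 = 2\,\mathrm{Re}\,\langle v'(t), v(t)\rangle = 2\,\mathrm{Re}\Big(-i\,\big\langle e^{-iW_t\Delta}(|u|^2u),\, e^{-iW_t\Delta}u\big\rangle\Big).
\end{equation*}
Because $e^{-iW_t\Delta}$ is unitary, the two copies of the propagator cancel, so the inner product equals $\langle |u|^2u, u\rangle = \int_{\T^2} |u(t,x)|^4\,\dd x$, which is a nonnegative real number (finite for a.e. $t$ by Cauchy--Schwarz, since both $|u|^2u$ and $u$ lie in $L^2_x$). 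Hence the bracket is $-i$ times a real quantity, its real part vanishes, and $\frac{\dd}{\dd t}\|v(t)\|_{L^2_x}^2 = 0$ almost everywhere. Since $\|v(t)\|_{L^2_x}^2$ is absolutely continuous in $t$, it is constant on $[0,T]$, and therefore so is $\|u(t)\|_{L^2_x} = \|v(t)\|_{L^2_x}$, which is the asserted conservation of mass.
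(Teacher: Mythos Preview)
Your proof is correct and follows essentially the same strategy as the paper: both reduce to showing that $\|e^{-iW_t\Delta}u(t)\|_{L^2_x}^2$ has vanishing derivative, and both rely on the fact that $\langle |u|^2u,\,u\rangle = \|u\|_{L^4}^4$ is real. The paper carries this out by expanding in Fourier coefficients and splitting the derivative into two pieces $I_1$ and $I_2$ (the pairing of $v'$ with the integral part and with $u(0)$, respectively) which are shown to cancel, whereas you compute $2\,\mathrm{Re}\,\langle v',v\rangle$ directly and invoke unitarity once; your presentation is more compact but the mechanism is identical.
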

\begin{proof}
It is common for the conservation of mass of Schr\"{o}dinger equations to follow from employing an energy method, however this approach is difficult to apply to general modulations $W_t$. Instead, we will differentiate the Fourier coefficients of \eqref{eq: TruncatedWeakNLSrho}. Note that the assumption that $u(t, x) \in L^{\infty}([0,T], L^2_x)$ allows for interchange of the operator $e^{iW_t\Delta}$ and the integral in \eqref{eq: TruncatedWeakNLSrho}.  Since $e^{iW_t\Delta}$ is always a unitary operator on $L^2_x$ it suffices to show that
\begin{align*}
&0 \\
&=
\frac{d}{dt}\bigg{\|}u(0, x) -i\int_0^t e^{-iW_{t'}\Delta}|u(t', x)|^2u(t', x) \dd t'\bigg{\|}_{L^2_x}^2 \\
&= 
\frac{d}{dt} \sum_{k \in \Z^2} \bigg{|} u_k(0) -i\int_0^t e^{iW_{t'}|k|^2} \sum_{k=k_1-k_2+k_3}u_{k_1}(t')\overline{u_{k_2}(t')}u_{k_3}(t')\dd t'\bigg{|}^2\\
&=
I_1+I_2,
\end{align*}
Where
\begin{align*}
&I_1 := \frac{d}{dt}\sum_{k \in \Z^2} \int_0^t\int_0^t e^{i(W_s-W_r)|k|^2}\sum_{\substack{k = k_1-k_2+k_3 \\ k = k_4-k_5+k_6}}u_{k_1}(s)\overline{u_{k_2}(s)}u_{k_3}(s)\\
&\hspace{40ex}\overline{u_{k_4}}(r)u_{k_5}(r)\overline{u_{k_6}(r)} \dd s \dd r \\
& 
I_2 := \frac{d}{dt}\sum_{k \in \Z^2} \int_0^t  \sum_{k=k_1-k_2+k_3}ie^{-iW_{t'}|k|^2}u_k(0)\overline{u_{k_1}}(t')u_{k_2}(t')\overline{u_{k_3}(t')}  \\
&\hspace{11em}
-ie^{iW_{t'}|k|^2}\overline{u_k(0)}u_{k_1}(t')\overline{u_{k_2}(t')}u_{k_3}(t') \dd t'
\end{align*} 
 For almost every $t \in [0,T]$ we have
\begingroup
\allowdisplaybreaks
\begin{align*}
&I_1 =\sum_{k \in \Z^2} \int_0^t e^{i(W_t-W_r)|k|^2} 
\sum_{\substack{k = k_1-k_2+k_3 \\ k = k_4-k_5+k_6}}u_{k_1}(t)\overline{u_{k_2}(t)}u_{k_3}(t)\overline{u_{k_4}(r)}u_{k_5}(r)\overline{u_{k_6}(r)}  \dd r \notag\\
&+
\sum_{k \in \Z^2} \int_0^t e^{i(W_s-W_t)|k|^2} \sum_{\substack{k = k_1-k_2+k_3 \\ k = k_4-k_5+k_6}}u_{k_1}(s)\overline{u_{k_2}(s)}u_{k_3}(s)\overline{u_{k_4}(t)}u_{k_5}(t)\overline{u_{k_6}(t)}  \dd s \notag\\
&=
2\text{RE} \sum_{k \in \Z^2} \int_0^t e^{i(W_t-W_r)|k|^2} \sum_{\substack{k = k_1-k_2+k_3 \\ k = k_4-k_5+k_6}}u_{k_1}(t)\overline{u_{k_2}(t)}u_{k_3}(t)
\overline{u_{k_4}(r)}u_{k_5}(r)\overline{u_{k_6}(r)} \dd r\notag\\
&=
2\text{RE}\sum_{k \in \Z^2} \sum_{k=k_1-k_2+k_3}u_{k_1}(t)\overline{u_{k_2}(t)}u_{k_3}(t)\sum_{k=k_4-k_5+k_6}\int_0^t e^{i(W_t-W_r)|k|^2}\overline{u_{k_4}(r)}u_{k_5}(r)\overline{u_{k_6}(r)} \dd r \notag\\
&=
2\text{RE}\sum_{k \in \Z^2} \sum_{k = k_1-k_2+k_3}u_{k_1}(t)\overline{u_{k_2}(t)}u_{k_3}(t)\overline{\big{(}iu_k(t) - ie^{-iW_t|k|^2}u_k(0) \big{)}} \notag\\
&=
2\text{RE} \int_{\T^2} -i(\overline{u(t,x)}-e^{-iW_t\Delta}\overline{u(0,x)} )|u(t,x)|^2u(t,x) \dd x\notag\\
&=
-2\text{Im} \int_{\T^2}|u(t,x)|^2u(t,x) e^{-iW_t\Delta}\overline{u(0,x)} \dd x,
\end{align*}
\endgroup
and
\begin{align*}
I_2 
&=
\sum_{k \in \Z^2} \sum_{k=k_1-k_2+k_3}ie^{-iW_t|k|^2}u_k(0)\overline{u_{k_1}}(t)u_{k_2}(t)\overline{u_{k_3}(t)} \notag\\
&\hspace{16ex}- ie^{iW_t|k|^2}\overline{u_k(0)}u_{k_1}(t)\overline{u_{k_2}(t)}u_{k_3}(t) \notag\\
&=
2\text{RE} \, i \int_{\T^2}|u(t,x)|^2\overline{u(t,x)}e^{iW_t\Delta}u(0,x) \dd x \notag\\
&=
2\text{Im} \int_{\T^2} |u(t,x)|^2u(t,x)e^{-iW_t\Delta}\overline{u(0,x)} \dd x.
\end{align*}
Therefore $I_1+I_2=0$ as desired.
\end{proof}

\section{Acknowledgment}
The author is grateful to prof. Bjoern Bringmann, prof. Gigliola Staffilani, and Hanzul Munkhbat for many insightful conversations and ideas about different directions to take the project.

\bibliographystyle{plain}
\nocite{*}
\bibliography{bibfile.bib}

@article {bourgain93,
    AUTHOR = {Bourgain, J.},
     TITLE = {Fourier transform restriction phenomena for certain lattice
              subsets and applications to nonlinear evolution equations.
              {I}. {S}chr\"odinger equations},
   JOURNAL = {Geom. Funct. Anal.},
  FJOURNAL = {Geometric and Functional Analysis},
    VOLUME = {3},
      YEAR = {1993},
    NUMBER = {2},
     PAGES = {107--156},
      ISSN = {1016-443X,1420-8970},
   MRCLASS = {35Q55 (11L07 35B10)},
  MRNUMBER = {1209299},
MRREVIEWER = {Yun\ Mei\ Chen},
       DOI = {10.1007/BF01896020},
       URL = {https://doi.org/10.1007/BF01896020},
}

@article {stew,
    AUTHOR = {Stewart, Gavin},
     TITLE = {On the wellposedness of periodic nonlinear {S}chr\"odinger
              equations with white noise dispersion},
   JOURNAL = {Stoch. Partial Differ. Equ. Anal. Comput.},
  FJOURNAL = {Stochastics and Partial Differential Equations. Analysis and
              Computations},
    VOLUME = {12},
      YEAR = {2024},
    NUMBER = {3},
     PAGES = {1417--1438},
      ISSN = {2194-0401,2194-041X},
   MRCLASS = {35R60 (35Q55)},
  MRNUMBER = {4781788},
MRREVIEWER = {Alp\ O.\ Eden},
       DOI = {10.1007/s40072-023-00306-9},
       URL = {https://doi.org/10.1007/s40072-023-00306-9},
}

@article {IncidenceGeometryStrichartz,
    AUTHOR = {Herr, Sebastian and Kwak, Beomjong},
     TITLE = {Strichartz estimates and global well-posedness of the cubic
              {NLS} on {$\Bbb {T}^2$}},
   JOURNAL = {Forum Math. Pi},
  FJOURNAL = {Forum of Mathematics. Pi},
    VOLUME = {12},
      YEAR = {2024},
     PAGES = {Paper No. e14, 21},
      ISSN = {2050-5086},
   MRCLASS = {35Q55 (35A23 52C30)},
  MRNUMBER = {4794808},
MRREVIEWER = {Vedran\ Sohinger},
       DOI = {10.1017/fmp.2024.11},
       URL = {https://doi.org/10.1017/fmp.2024.11},
}

@article {XsYsEstimates,
    AUTHOR = {Herr, Sebastian and Tataru, Daniel and Tzvetkov, Nikolay},
     TITLE = {Global well-posedness of the energy-critical nonlinear
              {S}chr\"odinger equation with small initial data in {$H^1(\Bbb
              T^3)$}},
   JOURNAL = {Duke Math. J.},
  FJOURNAL = {Duke Mathematical Journal},
    VOLUME = {159},
      YEAR = {2011},
    NUMBER = {2},
     PAGES = {329--349},
      ISSN = {0012-7094,1547-7398},
   MRCLASS = {35Q55 (35B30)},
  MRNUMBER = {2824485},
MRREVIEWER = {Matthew\ D.\ Blair},
       DOI = {10.1215/00127094-1415889},
       URL = {https://doi.org/10.1215/00127094-1415889},
}

@article {UpVpEstimates,
    AUTHOR = {Hadac, Martin and Herr, Sebastian and Koch, Herbert},
     TITLE = {Erratum to ``{W}ell-posedness and scattering for the {KP}-{II}
              equation in a critical space'' [{A}nn. {I}. {H}.
              {P}oincar\'e---{AN} 26 (3) (2009) 917--941] [MR2526409]},
   JOURNAL = {Ann. Inst. H. Poincar\'e{} C Anal. Non Lin\'eaire},
  FJOURNAL = {Annales de l'Institut Henri Poincar\'e{} C. Analyse Non
              Lin\'eaire},
    VOLUME = {27},
      YEAR = {2010},
    NUMBER = {3},
     PAGES = {971--972},
      ISSN = {0294-1449,1873-1430},
   MRCLASS = {35Q53 (35B30 35P25)},
  MRNUMBER = {2629889},
       DOI = {10.1016/j.anihpc.2010.01.006},
       URL = {https://doi.org/10.1016/j.anihpc.2010.01.006},
}

@book{UpVptextbook,
  author    = {Herbert Koch and Daniel Tataru and Monica Vi{\c{s}}an},
  title     = {Dispersive Equations and Nonlinear Waves:
               Generalized Korteweg--de Vries, Nonlinear Schr{\"o}dinger,
               Wave and Schr{\"o}dinger Maps},
  series    = {Oberwolfach Seminars},
  volume    = {45},
  publisher = {Birkh{\"a}user},
  year      = {2014},
  doi       = {10.1007/978-3-0348-0736-4},
  isbn      = {978-3-0348-0735-7},
}

@article {chouk2015nonlinearpdesmodulateddispersion,
    AUTHOR = {Chouk, K. and Gubinelli, M.},
     TITLE = {Nonlinear {PDE}s with modulated dispersion {I}: {N}onlinear
              {S}chr\"odinger equations},
   JOURNAL = {Comm. Partial Differential Equations},
  FJOURNAL = {Communications in Partial Differential Equations},
    VOLUME = {40},
      YEAR = {2015},
    NUMBER = {11},
     PAGES = {2047--2081},
      ISSN = {0360-5302,1532-4133},
   MRCLASS = {35Q55 (35R60 60H15 60H99)},
  MRNUMBER = {3418825},
MRREVIEWER = {Clemens\ F.\ Heitzinger},
       DOI = {10.1080/03605302.2015.1073300},
       URL = {https://doi.org/10.1080/03605302.2015.1073300},
}

@article {catellier2016averagingirregularcurves,
    AUTHOR = {Catellier, R. and Gubinelli, M.},
     TITLE = {Averaging along irregular curves and regularisation of {ODE}s},
   JOURNAL = {Stochastic Process. Appl.},
  FJOURNAL = {Stochastic Processes and their Applications},
    VOLUME = {126},
      YEAR = {2016},
    NUMBER = {8},
     PAGES = {2323--2366},
      ISSN = {0304-4149,1879-209X},
   MRCLASS = {34C29 (34F05 60G17 60G22 60H10)},
  MRNUMBER = {3505229},
MRREVIEWER = {Fuke\ Wu},
       DOI = {10.1016/j.spa.2016.02.002},
       URL = {https://doi.org/10.1016/j.spa.2016.02.002},
}

@article {galeati2023prevalencerhoirregularity,
    AUTHOR = {Galeati, Lucio and Gubinelli, Massimiliano},
     TITLE = {Prevalence of {$\rho $}-irregularity and related properties},
   JOURNAL = {Ann. Inst. Henri Poincar\'e{} Probab. Stat.},
  FJOURNAL = {Annales de l'Institut Henri Poincar\'e{} Probabilit\'es et
              Statistiques},
    VOLUME = {60},
      YEAR = {2024},
    NUMBER = {4},
     PAGES = {2415--2467},
      ISSN = {0246-0203,1778-7017},
   MRCLASS = {60H50 (37C99)},
  MRNUMBER = {4828848},
MRREVIEWER = {Jan\ I.\ Seidler},
       DOI = {10.1214/23-aihp1399},
       URL = {https://doi.org/10.1214/23-aihp1399},
}

@misc{herr2025globalwellposedness,
      title={Global well-posedness of the cubic nonlinear Schr\"odinger equation on $\mathbb{T}^{2}$}, 
      author={Sebastian Herr and Beomjong Kwak},
      year={2025},
      eprint={2502.17073},
      archivePrefix={arXiv},
      primaryClass={math.AP},
      url={https://arxiv.org/abs/2502.17073}, 
}

@article {debussche2010quinticNLS,
    AUTHOR = {Debussche, Arnaud and Tsutsumi, Yoshio},
     TITLE = {1{D} quintic nonlinear {S}chr\"odinger equation with white
              noise dispersion},
   JOURNAL = {J. Math. Pures Appl. (9)},
  FJOURNAL = {Journal de Math\'ematiques Pures et Appliqu\'ees. Neuvi\`eme
              S\'erie},
    VOLUME = {96},
      YEAR = {2011},
    NUMBER = {4},
     PAGES = {363--376},
      ISSN = {0021-7824,1776-3371},
   MRCLASS = {35Q55 (35R60 60H15)},
  MRNUMBER = {2832639},
MRREVIEWER = {Mar\'ia\ J.\ Garrido-Atienza},
       DOI = {10.1016/j.matpur.2011.02.002},
       URL = {https://doi.org/10.1016/j.matpur.2011.02.002},
}

@article{DEBOUARD20101300,
title = {The nonlinear Schrödinger equation with white noise dispersion},
journal = {Journal of Functional Analysis},
volume = {259},
number = {5},
pages = {1300-1321},
year = {2010},
issn = {0022-1236},
doi = {https://doi.org/10.1016/j.jfa.2010.04.002},
url = {https://www.sciencedirect.com/science/article/pii/S0022123610001412},
author = {Anne {de Bouard} and Arnaud Debussche},
keywords = {White noise dispersion, Strichartz estimates, Stochastic partial differential equation, Nonlinear fibre optics},
abstract = {Under certain scaling the nonlinear Schrödinger equation with random dispersion converges to the nonlinear Schrödinger equation with white noise dispersion. The aim of this work is to prove that this latter equation is globally well posed in L2 or H1. The main ingredient is the generalization of the classical Strichartz estimates. Additionally, we justify rigorously the formal limit described above.}
}

@article {pointwiseconvergenceschrodingerflow,
    AUTHOR = {Compaan, Erin and Luc\`a, Renato and Staffilani, Gigliola},
     TITLE = {Pointwise convergence of the {S}chr\"odinger flow},
   JOURNAL = {Int. Math. Res. Not. IMRN},
  FJOURNAL = {International Mathematics Research Notices. IMRN},
  VOLUME   = {1},
      YEAR = {2021},
    NUMBER = {1},
     PAGES = {599--650},
      ISSN = {1073-7928,1687-0247},
   MRCLASS = {35Q76},
  MRNUMBER = {4198507},
       DOI = {10.1093/imrn/rnaa036},
       URL = {https://doi.org/10.1093/imrn/rnaa036},
}

@article {bourgain2015proofl2decouplingconjecture,
    AUTHOR = {Bourgain, Jean and Demeter, Ciprian},
     TITLE = {The proof of the {$l^2$} decoupling conjecture},
   JOURNAL = {Ann. of Math. (2)},
  FJOURNAL = {Annals of Mathematics. Second Series},
    VOLUME = {182},
      YEAR = {2015},
    NUMBER = {1},
     PAGES = {351--389},
      ISSN = {0003-486X,1939-8980},
   MRCLASS = {42B37 (11E76 46E30 53C40)},
  MRNUMBER = {3374964},
MRREVIEWER = {G.\ V.\ Rozenblum},
       DOI = {10.4007/annals.2015.182.1.9},
       URL = {https://doi.org/10.4007/annals.2015.182.1.9},
}

@book{tao-vu-additive,
  author    = {Terence Tao and Van H. Vu},
  title     = {Additive Combinatorics},
  series    = {Cambridge Studies in Advanced Mathematics},
  volume    = {105},
  year      = {2010},
  publisher = {Cambridge University Press},
  address   = {Cambridge},
  mrnumber  = {2573797}
}

@incollection{carleson1980,
  author       = {Lennart Carleson},
  title        = {Some analytic problems related to statistical mechanics},
  booktitle    = {Euclidean Harmonic Analysis (Proc. Sem., Univ. Maryland, College Park, Md., 1979)},
  series       = {Lecture Notes in Mathematics},
  volume       = {779},
  pages        = {5--45},
  publisher    = {Springer},
  address      = {Berlin},
  year         = {1980}
}

@article {du2019sharpl2estimateschrodinger,
    AUTHOR = {Du, Xiumin and Zhang, Ruixiang},
     TITLE = {Sharp {$L^2$} estimates of the {S}chr\"odinger maximal
              function in higher dimensions},
   JOURNAL = {Ann. of Math. (2)},
  FJOURNAL = {Annals of Mathematics. Second Series},
    VOLUME = {189},
      YEAR = {2019},
    NUMBER = {3},
     PAGES = {837--861},
      ISSN = {0003-486X,1939-8980},
   MRCLASS = {42B20 (42B37)},
  MRNUMBER = {3961084},
MRREVIEWER = {Dong\ Dong},
       DOI = {10.4007/annals.2019.189.3.4},
       URL = {https://doi.org/10.4007/annals.2019.189.3.4},
}

@article {du2017sharpschrodingermaximalestimate,
    AUTHOR = {Du, Xiumin and Guth, Larry and Li, Xiaochun},
     TITLE = {A sharp {S}chr\"odinger maximal estimate in {$\Bbb R^2$}},
   JOURNAL = {Ann. of Math. (2)},
  FJOURNAL = {Annals of Mathematics. Second Series},
    VOLUME = {186},
      YEAR = {2017},
    NUMBER = {2},
     PAGES = {607--640},
      ISSN = {0003-486X,1939-8980},
   MRCLASS = {42B15 (35Q41 42B37)},
  MRNUMBER = {3702674},
MRREVIEWER = {Huoxiong\ Wu},
       DOI = {10.4007/annals.2017.186.2.5},
       URL = {https://doi.org/10.4007/annals.2017.186.2.5},
}

@misc{demeter2016schrodingermaximalfunctionestimates,
      title={Schr\"odinger maximal function estimates via the pseudoconformal transformation}, 
      author={Ciprian Demeter and Shaoming Guo},
      year={2016},
      eprint={1608.07640},
      archivePrefix={arXiv},
      primaryClass={math.CA},
      url={https://arxiv.org/abs/1608.07640}, 
}

@misc{lucà2015improvednecessaryconditionschrodinger,
      title={An improved necessary condition for the Schr\"odinger maximal estimate}, 
      author={Renato Lucà and Keith Rogers},
      year={2015},
      eprint={1506.05325},
      archivePrefix={arXiv},
      primaryClass={math.CA},
      url={https://arxiv.org/abs/1506.05325}, 
}

@article{MoyuaVega2008DEquals1SFlow,
  author    = {A. Moyua and L. Vega},
  title     = {Bounds for the maximal function associated to periodic solutions of one-dimensional dispersive equations},
  journal   = {Bulletin of the London Mathematical Society},
  volume    = {40},
  number    = {1},
  pages     = {117--128},
  year      = {2008},
  doi       = {10.1112/blms/bdm096}
}

@article{WangZhang2019DEquals2SFlow,
  author    = {Xiang Wang and Changjian Zhang},
  title     = {Pointwise convergence of solutions to the Schr\"odinger equation on manifolds},
  journal   = {Canadian Journal of Mathematics},
  volume    = {71},
  number    = {4},
  pages     = {983--995},
  year      = {2019},
  doi       = {10.4153/CJM-2018-029-6}
}

@article{ctx23497407350006761NWienerVarDefn,
author = {Wiener, Norbert},
address = {[Cambridge, Mass.] :},
issn = {0097-1421},
journal = {Journal of mathematics and physics /},
number = {2},
pages = {72-94},
publisher = {The Institute,},
title = {The Quadratic Variation of a Function and its Fourier Coefficients},
volume = {3},
year = {1924-03},
}

@article{KochTataru2005IntroductionOfVariaSpaces,
  author    = {Herbert Koch and Daniel Tataru},
  title     = {Dispersive estimates for principally normal pseudodifferential operators},
  journal   = {Communications on Pure and Applied Mathematics},
  volume    = {58},
  number    = {2},
  pages     = {217--284},
  year      = {2005},
  doi       = {10.1002/cpa.20067}
}
\end{document}